\newtheorem{theorem}{Theorem}
\newtheorem{condition}[theorem]{Condition}
\newtheorem{corollary}[theorem]{Corollary}
\newtheorem{definition}[theorem]{Definition}
\newtheorem{lemma}[theorem]{Lemma}
\newtheorem{proposition}[theorem]{Proposition}
\newtheorem{remark}[theorem]{Remark}
\newenvironment{proof}[1][Proof]{\noindent\textbf{#1.} }{\ \rule{0.5em}{0.5em}}
\begin{document}

\title{Stochastic differential equations in a scale of Hilbert spaces 2.
Global solutions.}
\author{Georgy Chargaziya \and Alexei Daletskii \\
Department of Mathematics, University of York, UK}
\maketitle

\begin{abstract}
A stochastic differential equation with coefficients defined in a scale of
Hilbert spaces is considered. The existence, uniqueness and path-continuity
of infinite-time solutions is proved by an extension of the Ovsyannikov
method. This result is applied to a system of equations describing
non-equilibrium stochastic dynamics of (real-valued) spins of an infinite
particle system on a typical realization of a Poisson or Gibbs point process
in ${\mathbb{R}}^{n}$. The paper improves the results of the work by the
second named author "Stochastic differential equations in a scale of Hilbert
spaces", Electron. J. Probab. 23, where finite-time solutions were
constructed.
\end{abstract}

\section{Introduction}

The purpose of this work is to study an infinite-dimensional stochastic
differential equation (SDE) 
\begin{equation}
d\xi (t)=f(\xi (t)dt+B(\xi (t))dW(t),  \label{sde}
\end{equation}%
with the coefficients $f$ and $B$ defined in a scale of densely embedded
Hilbert spaces $\left( X_{\alpha }\right) _{\alpha \in \mathcal{A}}$, where $%
\mathcal{A}$ is a real interval, and $W$ is a cylinder Wiener process on a
fixed Hilbert space $\mathcal{H}$. That is, $f$ and $B$ are Lipschitz
continuous maps $X_{\alpha }\rightarrow X_{\beta }$ and $X_{\alpha
}\rightarrow H_{\beta }:=HS(\mathcal{H},X_{\beta })$, $\beta >\alpha $,
respectively, but are not in general well-defined in any fixed $X_{\alpha }$%
, with the corresponding Lipschitz constants $L_{\alpha \beta }$ becoming
infinite as $\left\vert \alpha -\beta \right\vert \rightarrow 0$. Here $HS(%
\mathcal{H},X_{\beta })$ stands for the space of Hilbert-Schmidt operators $%
\mathcal{H}\rightarrow X_{\beta }$.

Equation (\ref{sde}) cannot be treated by methods of the classical theory of
SDEs in Banach spaces (see e.g. \cite{DaFo} and \cite{DZ}), because its
coefficients are singular in any fixed $X_{\alpha }$. Some progress can be
achieved if 
\begin{equation}
L_{\alpha \beta }\sim (\beta -\alpha )^{-1/q}\ \text{as }\left\vert \alpha
-\beta \right\vert \rightarrow 0,  \label{OES}
\end{equation}%
with $q=2$. Under this condition, a solution with initial value in $%
X_{\alpha }$ exists in $X_{\beta }$ , with lifetime $T_{\alpha \beta }\sim
(\beta -\alpha )^{1/2}$, see \cite{Dal}. This result generalizes the
Ovsyannikov method for ordinary differential equations, see e.g. \cite{Deim}%
, \cite{BHP} and \cite{DaF}, in which setting it is sufficient to assume
that $q=1$.

It has been noticed in \cite{DaF} that, in case of $q>1,$ a solution of the
ODE 
\begin{equation*}
\frac{d}{dt}u(t)=f(u(t)),\ u(0)\in X_{\alpha },
\end{equation*}%
exists in any $X_{\beta }$, $\beta >\alpha $, with infinite lifetime. In the
present paper, we build upon the ideas of \cite{DaF}, which enable us to
generalize the results of \cite{Dal} and prove the existence and uniqueness
of a global solution $\xi (t)$ of equation (\ref{sde}) in any $X_{\beta }$, $%
\beta >\alpha $, with initial value $\xi (0)\in X_{\alpha }$, provided (\ref%
{OES}) holds with $q>2$. Moreover, we show that $\xi (t)$ is $p$-integrable
for any $p<q$ and has a continuous modification.

The structure of the paper is as follows. In Section \ref{sec-main} we
introduce the framework and notations and formulate our main existence and
uniqueness result. In Section \ref{estimates} we obtain technical estimates,
which play crucial role in what follows. Section \ref{proof} is devoted to
the proof of our main existence and uniqueness result. In Sections \ref%
{estofsol} and \ref{contin} we derive an estimate of the growth of solutions
and prove the existence of its continuous modification, respectively.

Section \ref{spins} is devoted to our main example, which is motivated by
the study of countable systems of particles randomly distributed in a
Euclidean space ${\mathbb{R}}^{n}(=:\mathfrak{X})$. Each particle is
characterized by its position $x$ and an internal parameter (spin) $\sigma
_{x}\in S={\mathbb{R}^{1}}$. For a given fixed (\textquotedblleft
quenched\textquotedblright ) configuration $\gamma $ of particle positions,
which is a locally finite subset of ${\mathbb{R}}^{n}$, we consider a system
of stochastic differential equations describing (non-equilibrium) dynamics
of spins $\sigma _{x},$ $x\in \gamma $. Two spins $\sigma _{x}$ and $\sigma
_{y}$ are allowed to interact via a pair potential if the distance between $x
$ and $y$ is no more than a fixed interaction radius $r$, that is, they are
neighbors in the geometric graph defined by $\gamma $ and $r.$ Vertex
degrees of this graph are typically unbounded, which implies that the
coefficients of the corresponding equations cannot be controlled in a single
Hilbert or Banach space (in contrast to spin systems on a regular lattice,
which have been well-studied, see e.g. \cite{DZ1} and more recent
developments in \cite{ADK}, \cite{ADK1}, \cite{INZ}, and references
therein). However, under mild conditions on the density of $\gamma $
(holding for e.g. Poisson and Gibbs point processes in ${\mathbb{R}}^{n}$),
it is possible to apply the approach discussed above and construct a
solution in the scale of Hilbert spaces $S_{\alpha }^{\gamma }$ of weighted
sequences $(q_{x})_{x\in \gamma }\in S^{\gamma }$ such that $\sum_{x\in
\gamma }\left\vert q_{x}\right\vert ^{2}e^{-\alpha \left\vert x\right\vert
}<\infty ,\ \alpha >0$. Local solutions of the above system were constructed
in \cite{Dal} by a somewhat different method.

Construction of non-equilibrium stochastic dynamics of infinite particle
systems of the aforementioned type has been a long-standing problem, even in
the case of linear drift and a single-particle diffusion coefficient. It has
become important in the framework of analysis on 
spaces $\Gamma (\mathfrak{X},S)$ of configurations $\{(x,\sigma_x)\}_{x\in%
\gamma}$ with marks (see e.g. \cite{DaVe}), and is motivated by a variety of
applications, in particular in modeling of non-crystalline (amorphous)
substances, e.g. ferrofluids and amorphous magnets, see e.g. \cite{Romano}, 
\cite[Section 11]{OHandley}, \cite{Bov} and \cite{DKKP,DKKP1}. $\Gamma (%
\mathfrak{X},S)$ possesses a fibration-like structure over the space $\Gamma
(\mathfrak{X})$ of position configurations $\gamma$, with the fibres
identified with $S^{\gamma}$, see \cite{DKKP}. Thus the construction of spin
dynamics of a quenched system (in $S^{\gamma}$) is complementary to that of
the dynamics in $\Gamma (\mathfrak{X})$.

Various aspects of the study of deterministic (Hamiltonian) and stochastic
evolution of configurations $\gamma \in \Gamma (\mathfrak{X})$, in its
deterministic (Hamiltonian) and stochastic form have been discussed by many
authors, see e.g. \cite{Lan,LLL,FrLO,BGSR,AKR,FKK} and references given
there. It is anticipated that (some of) these results can be combined with
the approach proposed in the present paper allowing to build stochastic
dynamics on the marked configuration space $\Gamma (\mathfrak{X},S)$. In
particular, the results of Section \ref{spins} are used in a forthcoming
paper \cite{DaFin} for the construction of a mixed-type jump diffusion
dynamics in $\Gamma (\mathfrak{X},S)$.

Finally, in Section \ref{examples} we give two further examples of the maps
satisfying condition (\ref{OES}).

Observe that the family $X_{\alpha }=S_{\alpha }^{\gamma }$, $\alpha >0$,
forms the dual to nuclear space $\Phi ^{\prime }=\cup _{\alpha }X_{\alpha }$%
. SDEs on such spaces were considered in \cite{KMW}, \cite{KJX}. The
existence of solutions to the corresponding martingale problem was proved
under assumption of continuity of coefficients on $\Phi ^{\prime }$ and
their linear growth (which, for the diffusion coefficient, is supposed to
hold in each $\alpha $-norm). Moreover, the existence of strong solutions
requires a dissipativity-type estimate in each $\alpha $-norm, too, which
does not hold in our framework.\bigskip

\textbf{Acknowledgment}. We are very grateful to Dmitri Finkelshtein and
Zdzislaw Brze\'{z}niak for their interest in this work and many stimulating
discussions.

\section{Setting and main results\label{sec-main}}

In this section we introduce the general framework we will be using. We
start with the following general definition.

Let us consider a family $\mathfrak{B}$ of Banach spaces $B_{\alpha }$
indexed by $\alpha \in \mathcal{A}:=\left[ \alpha _{\ast },\alpha ^{\ast }%
\right] $ with fixed $0\leq \alpha _{\ast },\alpha ^{\ast }<\infty $, and
denote by $\left\Vert \cdot \right\Vert _{B_{\alpha }}$ the corresponding
norms. When speaking of these spaces and related objects, we will always
assume that the range of indices is $\left[ \alpha _{\ast },\alpha ^{\ast }%
\right] $, unless stated otherwise. The interval $\mathcal{A}$ remains fixed
for the rest of this work.

\begin{definition}
The family $\mathfrak{B}$ is called a scale if 
\begin{equation}
B_{\alpha }\subset B_{\beta }\ {\text{and }}\left\Vert u\right\Vert
_{B_{\beta }}\leq \left\Vert u\right\Vert _{B_{\alpha }}{\text{ for any }}%
\alpha <\beta ,\ u\in X_{\alpha },  \label{scale}
\end{equation}%
where the embedding means that $B_{\alpha }$ is a vector subspace of $%
B_{\beta }$.
\end{definition}

We will use the following notations:%
\begin{equation*}
\overline{B}:=\dbigcup\limits_{\alpha \in \left[ \alpha _{\ast },\alpha
^{\ast }\right) }B_{a},\ \underline{B}:=\dbigcap\limits_{\alpha \in \left(
\alpha _{\ast },\alpha ^{\ast }\right] }B_{a}.
\end{equation*}

\begin{definition}
\label{Defovsop}For two scales $\mathfrak{B}_{1}$, $\mathfrak{B}_{2}$ (with
the same index set) and a constant $q>0$ we introduce the class ${\mathcal{GL%
}}_{q}(\mathfrak{B}_{1},\mathfrak{B}_{2})$ of (generalized Lipschitz) maps $%
f:\overline{B}\rightarrow \overline{B}$ such that

\begin{enumerate}
\item $f(B_{\alpha })\subset B_{\beta }$ for any $\alpha <\beta $;

\item there exists constant $L>0$ such that 
\begin{equation}
\left\Vert f(u)-f(v)\right\Vert _{\beta }\leq \frac{L}{\left\vert \beta
-\alpha \right\vert ^{1/q}}\left\Vert u-v\right\Vert _{B_{\alpha }}
\label{lg}
\end{equation}%
for any $\alpha <\beta $ and $u,v\in B_{\alpha }$.
\end{enumerate}
\end{definition}

We will write ${\mathcal{GL}}_{q}(\mathfrak{B}):={\mathcal{GL}}_{q}(%
\mathfrak{B}_{1},\mathfrak{B}_{2})$ if $\mathfrak{B}_{1}=\mathfrak{B}_{2}=:%
\mathfrak{B}$.

\begin{remark}
The constant $L$ may depend on $\alpha ^{\ast }$ and $\alpha _{\ast }$, as
usually happens in applications.
\end{remark}

\begin{remark}
\label{rem-bound}Setting $v=0$ in (\ref{lg}), we obtain the linear growth
condition 
\begin{equation}
\left\Vert f(u)\right\Vert _{B_{\beta }}\leq \frac{K}{\left\vert \beta
-\alpha \right\vert ^{1/q}}\left( 1+\left\Vert u\right\Vert _{B_{\alpha
}}\right) ,\ u\in B_{a},  \label{gc}
\end{equation}%
for some constant $K$ and any $\alpha <\beta $.
\end{remark}

\begin{remark}
Assume that $\phi $ is Lipschitz continuous in each $B_{\alpha }$ with a
uniform Lipschitz constant $M$. Then $\phi \in {\mathcal{GL}}_{q}(\mathfrak{B%
})$ with $L=\left( \alpha ^{\ast }-\alpha _{\ast }\right) ^{1/q}M$.
\end{remark}

\begin{remark}
\label{rem-scale}Some authors have used the scale $B_{\alpha }$ such that $%
B_{\alpha }\subset B_{\beta }$ if $\alpha >\beta $. That framework can be
transformed to our setting by an appropriate change of the parametrization,
e.g. $\alpha \mapsto \alpha ^{\ast }-\alpha $.
\end{remark}

In what follows, we will use the following three main scales of spaces:

\begin{enumerate}
\item[(1)] the scale $\mathfrak{X}$ of separable Hilbert spaces $X_{\alpha
}; $

\item[(2)] the scale $\mathfrak{H}$ of spaces 
\begin{equation}
H_{\alpha }\equiv HS(\mathcal{H},X_{\alpha }):=\left\{ {\text{%
Hilbert-Schmidt operators }}{\mathcal{H}}\rightarrow X_{\alpha }\right\} ,
\label{HS}
\end{equation}%
for a fixed separable Hilbert space $\mathcal{H}$;

\item[(3)] the scale $\mathfrak{Z}_{T}^{p}$ of Banach spaces $Z_{\alpha
,T}^{p}$ of progressively measurable random processes $u:\left[ 0,T\right)
\rightarrow X_{\alpha }$ with finite norm 
\begin{equation*}
\left\Vert u\right\Vert _{Z_{\alpha ,T}^{p}}:=\sup_{t\in \left[ 0,T\right)
}\left( {\mathbb{E}}\left\Vert u(t)\right\Vert _{X_{\alpha }}^{p}\right)
^{1/p}\text{{,}}
\end{equation*}%
defined on a suitable filtered probability space $\left( \Omega ,\mathcal{F}%
,P\right) $.
\end{enumerate}

Our aim is to construct a strong solution of equation (\ref{sde}), that is,
a solution of the stochastic integral equation 
\begin{equation}
u(t)=u_{0}+\int_{0}^{t}f(u(s))ds+\int_{0}^{t}B(u(s))dW(s),\ t\leq T,\
u_{0}\in X_{\alpha _{\ast }},  \label{SDE-int}
\end{equation}%
where $W(t),\ t\leq T,$ is a fixed cylinder Wiener process in $\mathcal{H}$
(cf. (\ref{HS})) defined on the probability space $\left( \Omega ,\mathcal{F}%
,P\right) $, with coefficients acting in the scale $\mathfrak{X}$ for a
fixed $p\geq 2$.

The following theorem states the main result of this paper.

\begin{theorem}[Existence and uniqueness]
\label{KeyResult}\label{theor-main} Assume that $f\in {\mathcal{GL}}_{q}(%
\mathfrak{X})$ and $B\in {\mathcal{GL}}_{q}(\mathfrak{X},\mathfrak{H})$, $%
q>2 $ and $u_{0}\in X_{\alpha }$, $\alpha \in \mathcal{A}$. Then, for any $%
T>0$, the following holds:

\begin{enumerate}
\item[(1)] equation (\ref{SDE-int}) has a unique solution $u\in Z_{\alpha
^{\ast },T}^{2}$;

\item[(2)] $u\in Z_{\beta ,T}^{p}$ for any $p\in \left[ 2,q\right) $ and $%
\beta >\alpha $;

\item[(3)] $u$ has continuous sample paths a.s.
\end{enumerate}
\end{theorem}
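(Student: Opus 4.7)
The plan is to build the solution by a Picard-type iteration in which the scale index is refined at each step, combined with a convolution-type time estimate that tames the singular Lipschitz constants, in the spirit of the Ovsyannikov scheme used in \cite{Dal} and \cite{DaF}. Concretely, fix $\alpha<\beta\le\alpha^{\ast}$ and $p\in[2,q)$, pick an increasing sequence $\beta_{0}=\alpha<\beta_{1}<\beta_{2}<\dots$ with $\beta_n\to\beta$ and gaps $\beta_{k+1}-\beta_{k}=c(\beta-\alpha)k^{-s}$ for a parameter $s\in(1,q/p)$ (which is non-empty exactly because $p<q$), and set $u_{0}(t)\equiv u_{0}$, $u_{n+1}(t)=u_{0}+\int_{0}^{t}f(u_{n}(s))\,ds+\int_{0}^{t}B(u_{n}(s))\,dW(s)$. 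An induction based on the linear growth bound from Remark \ref{rem-bound} shows $u_{n}\in Z_{\beta_{n},T}^{p}$ for every $n$.

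The main technical step is a recursive pointwise-in-$t$ estimate
\[
\mathbb{E}\|u_{n+1}(t)-u_{n}(t)\|_{X_{\beta_{n+1}}}^{p}\le C_{p,T}\,L^{p}(\beta_{n+1}-\beta_{n})^{-p/q}\int_{0}^{t}\mathbb{E}\|u_{n}(s)-u_{n-1}(s)\|_{X_{\beta_{n}}}^{p}\,ds,
\]
obtained by applying the Burkholder--Davis--Gundy inequality to the stochastic integral, H\"older's inequality to the drift integral, and the generalized Lipschitz property (\ref{lg}) to the norms of $f(u_{n})-f(u_{n-1})$ and $B(u_{n})-B(u_{n-1})$. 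Iterating $n$ times and using the chosen $\beta_k$ yields
\[
\mathbb{E}\|u_{n+1}(t)-u_{n}(t)\|_{X_{\beta_{n+1}}}^{p}\le M\,\kappa^{n}\,t^{n}\,(n!)^{sp/q-1},
\]
where the factor $t^{n}/n!$ comes from the iterated time integration and $(n!)^{sp/q}$ from $\prod_{k}(\beta_{k+1}-\beta_{k})^{-p/q}$. Since $sp/q<1$, the factor $(n!)^{sp/q-1}$ decays factorially, so $\sum_{n}(\mathbb{E}\|u_{n+1}-u_{n}\|_{X_{\beta}}^{p})^{1/p}$ converges uniformly on $[0,T]$; thus $(u_{n})$ is Cauchy in $Z_{\beta,T}^{p}$, and its limit $u$ satisfies (\ref{SDE-int}) after passing to the limit using continuity of $f,B$ between scales. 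Specializing $p=2$, $\beta=\alpha^{\ast}$ gives item (1), while arbitrary $p\in[2,q)$ and $\beta>\alpha$ give item (2).

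Uniqueness follows from the same iterative estimate applied to the difference $u-v$ of two hypothetical solutions, the $n$-fold iteration forcing $\mathbb{E}\|u(t)-v(t)\|_{X_{\beta_{n}}}^{p}\to 0$ along $\beta_n\uparrow\alpha^\ast$. Path continuity I would obtain from the Kolmogorov--Chentsov criterion applied to the increment estimate
\[
\mathbb{E}\|u(t)-u(s)\|_{X_{\beta'}}^{p}\le C\bigl((t-s)^{p/2}+(t-s)^{p}\bigr),\qquad \beta<\beta'\le\alpha^{\ast},
\]
again a consequence of BDG and H\"older applied to $\int_s^t f(u)\,dr+\int_s^t B(u)\,dW$; the exponent $p/2>1$ required by Kolmogorov is feasible precisely because $q>2$ permits $p>2$.

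The main obstacle, and the reason the improvement from $q>1$ in the deterministic setting to $q>2$ in the stochastic setting is needed, is the simultaneous balance of exponents: $s>1$ so that $\sum_{k}(\beta_{k+1}-\beta_{k})$ is finite and the indices do not overshoot $\beta$; $s<q/p$ so that the factorial growth of $\prod_{k}(\beta_{k+1}-\beta_{k})^{-p/q}$ is beaten by the $1/n!$ from iterated time integration; $p\ge 2$ for It\^o integrability; and $p>2$ for Kolmogorov's criterion. The window $p\in[2,q)$, $s\in(1,q/p)$ on which the whole construction rests is non-empty exactly when $q>2$.
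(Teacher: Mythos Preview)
Your proposal is correct and follows essentially the same Ovsyannikov--Picard scheme as the paper: the recursive estimate via H\"older and the It\^o-moment (BDG-type) inequality, iterated with scale refinement so that the $1/n!$ from repeated time integration defeats the singular Lipschitz factors precisely when $p<q$, followed by Kolmogorov's criterion for continuity. The only difference is bookkeeping in the refinement: you use a fixed infinite partition with gaps $\beta_{k+1}-\beta_k\sim k^{-s}$, $s\in(1,q/p)$, yielding the bound $(n!)^{sp/q-1}$, whereas the paper repartitions $[\alpha,\beta]$ into $n$ equal pieces at step $n$, yielding $n^{np/q}/n!$; both schemes give the same convergence window $p<q$ and are standard variants of the same idea.
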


The proof of the first two statements is given in Section \ref{EU} below. We
will show that the map $u\mapsto \mathcal{T}(u)$, where%
\begin{equation}
\mathcal{T}(u)(t)=u_{0}+\int_{0}^{t}f(u(s))ds+\int_{0}^{t}B(u(s))dW(s),\
t\in \lbrack 0,T],  \label{lip-map}
\end{equation}%
has a unique fixed point in $Z_{\beta ,T}^{p}$ for any $\beta >\alpha $, by
Picard iterative process. The third statement is proved in Section \ref%
{contin} by Kolmogorov's continuity theorem.

From now on, we keep $u_{0}$ fixed assume without loss of generality that $%
u_{0}\in X_{\alpha _{\ast }}$ (otherwise, we can always re-define the
parameter set $\mathcal{A}$). We also fix an arbitrary $T$ and write $%
Z_{\beta }^{p}$ instead of $Z_{\beta ,T}^{p}$.

\section{Main estimates\label{estimates}}

\label{AR}

In this section, we derive certain estimates of the map $\mathcal{T}$
defined by formula (\ref{lip-map}). We first observe that if $\xi \in
Z_{\alpha }^{p}$ and $\alpha <\beta $ then $\Phi (\xi )$ is a predictable $%
H_{\beta }$-valued process because $\Phi $ is continuous by inequality (\ref%
{lg}). Now using inequality (\ref{gc}) we also see that%
\begin{multline*}
\int_{0}^{T}\left\Vert \Phi (\xi (s))\right\Vert _{H_{\beta }}^{2}ds\leq
C_{1}\int_{0}^{T}\left( 1+\left\Vert \xi (s)\right\Vert _{B_{\alpha
}}^{2}\right) ds \\
\leq C_{2}\int_{0}^{T}\left( 1+\left\Vert \xi (s)\right\Vert _{B_{\alpha
}}^{p}\right) ds<\infty 
\end{multline*}%
because $\xi \in Z_{\alpha }^{p}\subset Z_{\alpha }^{2}$. Thus the
stochastic integral 
\begin{equation*}
\int_{0}^{t}\Phi (\xi (s))dW(s),\ t\leq T,
\end{equation*}%
is unambiguously defined as a square integrable $X_{\beta }$-valued
martingale. Therefore $\mathcal{T}$ is a well-defined map $\overline{Z\,}%
\rightarrow Z_{\alpha ^{\ast }}$.

\begin{theorem}
Assume that $f\in {\mathcal{GL}}_{q}(\mathfrak{X})$ and $B\in {\mathcal{GL}}%
_{q}(\mathfrak{X},\mathfrak{H})$. Then $\mathcal{T\in }{\mathcal{GL}}_{q}(%
\mathfrak{Z}^{p})$ for any $p\geq 2$.
\end{theorem}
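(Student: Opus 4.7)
To verify that $\mathcal{T}\in\mathcal{GL}_{q}(\mathfrak{Z}^{p})$ I will check the two clauses of Definition \ref{Defovsop}: the inclusion $\mathcal{T}(Z_{\alpha}^{p})\subset Z_{\beta}^{p}$ whenever $\alpha<\beta$, and the singular Lipschitz bound with exponent $1/q$. The Lipschitz estimate is the main target; the inclusion then falls out by specialising $v\equiv 0$ and invoking the linear-growth consequence of Remark \ref{rem-bound}.

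Fix $\alpha<\beta$ in $\mathcal{A}$ and $u,v\in Z_{\alpha}^{p}$. For each $t\in[0,T]$ split
\[
\mathcal{T}(u)(t)-\mathcal{T}(v)(t)=\int_{0}^{t}(f(u(s))-f(v(s)))\,ds+\int_{0}^{t}(B(u(s))-B(v(s)))\,dW(s),
\]
and use $\|a+b\|^{p}\le 2^{p-1}(\|a\|^{p}+\|b\|^{p})$ to treat the two summands separately in $L^{p}(\Omega;X_{\beta})$, uniformly in $t$.

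For the drift, Jensen's inequality in the time variable followed by the hypothesis $f\in\mathcal{GL}_{q}(\mathfrak{X})$ applied pathwise yields
\[
\mathbb{E}\Big\|\int_{0}^{t}(f(u(s))-f(v(s)))\,ds\Big\|_{X_{\beta}}^{p}\le T^{p-1}\int_{0}^{t}\frac{L_{f}^{p}}{|\beta-\alpha|^{p/q}}\,\mathbb{E}\|u(s)-v(s)\|_{X_{\alpha}}^{p}\,ds,
\]
which is dominated by $T^{p}L_{f}^{p}|\beta-\alpha|^{-p/q}\|u-v\|_{Z_{\alpha}^{p}}^{p}$. For the stochastic integral, the Hilbert-space Burkholder--Davis--Gundy inequality bounds its $p$-th moment by $C_{p}\,\mathbb{E}\bigl(\int_{0}^{t}\|B(u(s))-B(v(s))\|_{H_{\beta}}^{2}\,ds\bigr)^{p/2}$; a second Jensen step (legal because $p\ge 2$) combined with $B\in\mathcal{GL}_{q}(\mathfrak{X},\mathfrak{H})$ then gives the analogous bound $C_{p}T^{p/2}L_{B}^{p}|\beta-\alpha|^{-p/q}\|u-v\|_{Z_{\alpha}^{p}}^{p}$.

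Summing, taking the supremum over $t\in[0,T]$, and then the $p$-th root produces
\[
\|\mathcal{T}(u)-\mathcal{T}(v)\|_{Z_{\beta}^{p}}\le\frac{L_{\mathcal{T}}}{|\beta-\alpha|^{1/q}}\|u-v\|_{Z_{\alpha}^{p}},
\]
with $L_{\mathcal{T}}$ depending only on $T$, $p$, $L_{f}$, $L_{B}$ and the BDG constant $C_{p}$. Setting $v\equiv 0$, together with $u_{0}\in X_{\alpha_{\ast}}\subset X_{\beta}$ and the linear growth of $f,B$ from Remark \ref{rem-bound}, shows $\mathcal{T}(0)\in Z_{\beta}^{p}$ and hence $\mathcal{T}(u)\in Z_{\beta}^{p}$, giving the inclusion clause. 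There is no genuine obstacle here; the one subtlety to track is that the singular factor $|\beta-\alpha|^{-1/q}$ enters both moment bounds raised to the power $p$, so that the final $p$-th root restores precisely the exponent $1/q$ demanded by Definition \ref{Defovsop}.
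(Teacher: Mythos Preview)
Your proposal is correct and follows essentially the same approach as the paper: split $\mathcal{T}(u)-\mathcal{T}(v)$ into drift and martingale parts, apply $\|a+b\|^{p}\le 2^{p-1}(\|a\|^{p}+\|b\|^{p})$, control the drift via H\"older/Jensen and the stochastic integral via a $p$-th moment bound (the paper cites a moment formula rather than BDG, but the effect is identical), insert the $\mathcal{GL}_{q}$ hypotheses on $f$ and $B$, and take the $p$-th root to recover the $|\beta-\alpha|^{-1/q}$ factor. The only organizational difference is that the paper establishes the inclusion $\mathcal{T}(Z_{\alpha})\subset Z_{\beta}$ first and then the Lipschitz bound, whereas you derive the inclusion at the end from the Lipschitz estimate plus $\mathcal{T}(0)\in Z_{\beta}^{p}$; both orderings are fine.
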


\begin{proof}
Let us fix $p\geq 2$ and $\alpha ,\beta \in \mathcal{A}$, $\alpha <\beta $.
For simplicity, we will use the shorthand notation $Z_{\alpha }:=Z_{\alpha
,T}^{p}$.

We first show that $\mathcal{T}(Z_{\alpha })\subset Z_{\beta }$. Observe
that $f(u(s))\in X_{\beta }$ and $B(u(s))\in H_{\beta }\ $for any $s\in
\lbrack 0,T],$ and the integrals in the right-hand side of (\ref{lip-map})
are well-defined in $X_{\beta }$. The inclusion in question immediately
follows from the properties of those integrals.

Now we shall show that condition (\ref{lg}) of the Definition \ref{Defovsop}
also holds. Introduce notations 
\begin{equation*}
\bar{F}(s):=F(\xi _{1}(s))-F(\xi _{2}(s))\text{ and }\bar{\Phi}(s):=\Phi
(\xi _{1}(s))-\Phi (\xi _{2}(s)),\ s\in \lbrack 0,T],
\end{equation*}%
we obtain%
\begin{multline}
\mathbb{E}\left[ ||\mathcal{T}(\xi _{1})(t)-\mathcal{T}(\xi
_{2})(t)||_{X_{\beta }}^{p}\right] =\mathbb{E}\left[ ||\int_{0}^{t}\bar{F}%
(s)ds+\int_{0}^{t}\bar{\Phi}(s)dW(s)||_{X_{\beta }}\right] ^{p} \\[0.01in]
\leq 2^{p-1}\mathbb{E}\left[ \int_{0}^{t}||\bar{F}(s)||_{X_{\beta }}ds\right]
^{p}+2^{p-1}\mathbb{E}\left[ ||\int_{0}^{t}\bar{\Phi}(s)dW(s)\ ||_{X_{\beta
}}\right] ^{p}.  \label{EqAux1}
\end{multline}%
Applying the H\"{o}lder inequality, well-known formula for the moments of
Ito integral (see e.g. \cite{DaF}) and estimate (\ref{lg}) to inequality (%
\ref{EqAux1}) above we obtain the estimate{\ \fontsize{10}{7.2}\selectfont%
\begin{multline}
\mathbb{E}\left[ ||\mathcal{T}(\xi _{1})(t)-\mathcal{T}(\xi
_{2})(t)||_{X_{\beta }}^{p}\right] \leq 2^{p-1}t^{p-1}\int_{0}^{t}\mathbb{E~}%
||\bar{F}(s)||_{X_{\beta }}^{p}ds \\
+2^{p-1}\left[ \frac{p}{2}(p-1)\right] ^{p/2}t^{p/2-1}\int_{0}^{t}\mathbb{E~}%
||\bar{\Phi}(s)||_{H_{\beta }}^{p}ds \\
\leq \frac{\hat{L}(T)}{(\beta -\alpha )^{p/q}}\int_{0}^{t}\mathbb{E~}||\xi
_{1}(s)-\xi _{2}(s)||_{X_{\alpha }}^{p}ds,  \label{2ndIntMapLCondition}
\end{multline}%
where} $\hat{L}(T)=(T^{p-1}+\left[ \frac{p}{2}(p-1)\right]
^{p}T^{p/2-1})2^{p-1}L^{p}$ . Consequently for all $\alpha <\beta \in 
\mathcal{A}$ and $\xi _{1},\xi _{2}\in Z_{\alpha }$ the following general
result holds: 
\begin{multline}
||\mathcal{T}(\xi _{1})-\mathcal{T}(\xi _{2})||_{Z_{\beta }}\leq \frac{\hat{L%
}(T)T}{(\beta -\alpha )^{p/q}}\sup_{s\in \lbrack 0,T]}\mathbb{E}~||\xi
_{1}(s)-\xi _{2}(s)||_{X_{\alpha }}^{p}  \label{LipZ} \\
=\frac{\sqrt[p]{\hat{L}(T)T}}{(\beta -\alpha )^{1/q}}||\xi _{1}-\xi
_{2}||_{Z_{\alpha }},
\end{multline}%
and the proof is complete.
\end{proof}

\begin{corollary}
\label{CompositionLemma} For any $\alpha >\alpha _{\ast }$ and all $n\in 
\mathbb{N}$%
\begin{equation*}
\mathcal{T}^{n}:Z_{\alpha _{\ast }}\rightarrow Z_{\alpha },
\end{equation*}%
where $\mathcal{T}^{n}$ stands for the $n$-th composition power of $\mathcal{%
T}$.
\end{corollary}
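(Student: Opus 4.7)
The plan is to iterate the inclusion $\mathcal{T}(Z_\alpha) \subset Z_\beta$ (valid for any $\alpha < \beta$, which is part 1 of Definition \ref{Defovsop} and established by the preceding theorem) along a chosen partition of the interval $[\alpha_\ast, \alpha]$. Specifically, given $\alpha > \alpha_\ast$ and $n \in \mathbb{N}$, I would pick any strictly increasing sequence
\[
\alpha_\ast = \alpha_0 < \alpha_1 < \dots < \alpha_n = \alpha,
\]
for instance the uniform partition $\alpha_k = \alpha_\ast + k(\alpha - \alpha_\ast)/n$. By the preceding theorem, for each $k = 1,\dots,n$ we have $\mathcal{T}(Z_{\alpha_{k-1}}) \subset Z_{\alpha_k}$, since $\alpha_{k-1} < \alpha_k$. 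Composing these inclusions yields
\[
\mathcal{T}^n(Z_{\alpha_\ast}) = \mathcal{T}(\mathcal{T}^{n-1}(Z_{\alpha_\ast})) \subset \mathcal{T}(Z_{\alpha_{n-1}}) \subset Z_{\alpha_n} = Z_\alpha,
\]
which is the desired conclusion.

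There is no real obstacle here; the corollary is a pure book-keeping consequence of the fact that $\mathcal{T}$ gains regularity (i.e.\ moves strictly to the right in the scale) each time it is applied. The only subtlety worth mentioning in the write-up is that $n$ applications of $\mathcal{T}$ can cover the gap $\alpha - \alpha_\ast$ in $n$ equal steps regardless of how small that gap is — the singularity of the Lipschitz constant $\hat L(T)/(\beta-\alpha)^{p/q}$ in \eqref{LipZ} does not prevent the \emph{mapping property} from holding, it only affects quantitative contraction estimates that will be needed elsewhere (e.g.\ in the Picard iteration of Section \ref{proof}).
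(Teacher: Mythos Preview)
Your argument is correct and is exactly the intended one: the paper states this as an immediate corollary of the preceding theorem without writing out a proof, and the partition-and-iterate reasoning you give is the obvious way to unpack it (the same equal-step partition is used explicitly in the very next lemma to derive the quantitative estimate~\eqref{n-est}). Nothing to add.
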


Corollary \ref{CompositionLemma} shows in particular that given $\xi \in
Z_{\alpha _{\ast }}$ the sequence of processes $\{\mathcal{T}^{n}(\xi
)\}_{n=1}^{\infty }$ belongs to $Z_{\alpha }$ for all $\alpha >\alpha _{\ast
}$.

\begin{remark}
Observe that we have 
\begin{equation}
\sqrt[p]{\hat{L}(T)T}\leq a(T)=\left\{ 
\begin{array}{c}
a_{p}LT,\ T\geq 1 \\ 
a_{p}LT^{1/2},\ T<1%
\end{array}%
\right. \text{, }  \label{Lbound}
\end{equation}%
where $a_{p}=2^{p-1}\left( \left( \frac{p}{2}\right) ^{p/2}(p-1)+1\right) .$
\end{remark}

\begin{lemma}
For any $n\in \mathbb{N}$, $\alpha <\beta $ and $\xi _{1},\xi _{2}\in
Z_{\alpha }$ we have the estimate%
\begin{equation}
||\mathcal{T}^{n}(\xi _{1})-\mathcal{T}^{n}(\xi _{2})||_{Z_{\beta }}^{p}\leq 
\frac{n^{np/q}}{n!}\left( \frac{\hat{L}(T)T}{(\beta -\alpha )^{p/q}}\right)
^{n}||\xi _{1}-\xi _{2}||_{Z_{\alpha }}^{p}.  \label{n-est}
\end{equation}
\end{lemma}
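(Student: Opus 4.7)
The plan is to iterate the pointwise-in-time integral estimate \eqref{2ndIntMapLCondition} rather than its sup-norm consequence \eqref{LipZ}, so that the successive time integrations produce a simplex of volume $t^{n}/n!$ and yield the factorial denominator. The ``increase in the scale parameter'' is handled by partitioning $[\alpha,\beta]$ into $n$ equal subintervals, which is the key Ovsyannikov trick allowing the singular factors $(\beta_{k}-\beta_{k-1})^{-p/q}$ to be collected into a single $(\beta-\alpha)^{-np/q}$ up to the $n^{np/q}$ correction.

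More precisely, I would set $\beta_{k}:=\alpha+\frac{k}{n}(\beta-\alpha)$ for $k=0,1,\ldots,n$, so that $\beta_{0}=\alpha$, $\beta_{n}=\beta$ and $\beta_{k}-\beta_{k-1}=(\beta-\alpha)/n$. Apply estimate \eqref{2ndIntMapLCondition} with the pair $(\beta_{k-1},\beta_{k})$ to the processes $\mathcal{T}^{k-1}(\xi_{1})$ and $\mathcal{T}^{k-1}(\xi_{2})$ (which lie in $Z_{\beta_{k-1}}$ by Corollary~\ref{CompositionLemma}), obtaining
\begin{equation*}
\mathbb{E}\,\|\mathcal{T}^{k}(\xi_{1})(t)-\mathcal{T}^{k}(\xi_{2})(t)\|_{X_{\beta_{k}}}^{p}
\leq \frac{\hat{L}(T)\,n^{p/q}}{(\beta-\alpha)^{p/q}}\int_{0}^{t}\mathbb{E}\,\|\mathcal{T}^{k-1}(\xi_{1})(s)-\mathcal{T}^{k-1}(\xi_{2})(s)\|_{X_{\beta_{k-1}}}^{p}\,ds.
\end{equation*}
Iterating this inequality for $k=n,n-1,\ldots,1$ produces the $n$-fold iterated integral
\begin{equation*}
\mathbb{E}\,\|\mathcal{T}^{n}(\xi_{1})(t)-\mathcal{T}^{n}(\xi_{2})(t)\|_{X_{\beta}}^{p}
\leq \left(\frac{\hat{L}(T)\,n^{p/q}}{(\beta-\alpha)^{p/q}}\right)^{\!n}\!\!\int_{0}^{t}\!\!\int_{0}^{s_{n}}\!\!\cdots\!\!\int_{0}^{s_{2}}\mathbb{E}\,\|\xi_{1}(s_{1})-\xi_{2}(s_{1})\|_{X_{\alpha}}^{p}\,ds_{1}\cdots ds_{n}.
\end{equation*}

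Bounding the inner expectation uniformly by $\|\xi_{1}-\xi_{2}\|_{Z_{\alpha}}^{p}$ and using that the simplex $\{0\leq s_{1}\leq\cdots\leq s_{n}\leq t\}$ has volume $t^{n}/n!$ gives
\begin{equation*}
\mathbb{E}\,\|\mathcal{T}^{n}(\xi_{1})(t)-\mathcal{T}^{n}(\xi_{2})(t)\|_{X_{\beta}}^{p}
\leq \frac{n^{np/q}\,t^{n}}{n!}\,\frac{\hat{L}(T)^{n}}{(\beta-\alpha)^{np/q}}\,\|\xi_{1}-\xi_{2}\|_{Z_{\alpha}}^{p}.
\end{equation*}
Taking the supremum over $t\in[0,T]$ and recognizing the product $\hat{L}(T)^{n}T^{n}=(\hat{L}(T)T)^{n}$ yields exactly \eqref{n-est}.

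The technically delicate point is to recognize that one must retain the sharper form \eqref{2ndIntMapLCondition}, in which the $p$-th moment at time $t$ is bounded by the \emph{time-integral} of the $p$-th moment of the input, before collapsing it to the sup-norm estimate \eqref{LipZ}; otherwise, only the much weaker bound with $(\hat L(T)T)^n$ in place of $(\hat L(T)T)^n/n!$ follows. The partition of the scale interval into $n$ equal pieces, combined with this integral-form contraction, is what enables the $n^{np/q}/n!$ prefactor that will in turn make Picard iterates converge for all $T$ in the next section.
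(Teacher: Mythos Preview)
Your proof is correct and follows essentially the same approach as the paper: an equal-step partition of $[\alpha,\beta]$ into $n$ subintervals combined with iteration of the pointwise-in-time estimate \eqref{2ndIntMapLCondition}, then evaluation of the resulting $n$-fold time integral as $t^{n}/n!$. The only minor remark is that your appeal to Corollary~\ref{CompositionLemma} for the membership $\mathcal{T}^{k-1}(\xi_{i})\in Z_{\beta_{k-1}}$ is slightly imprecise, since that corollary is stated for the base space $Z_{\alpha_{\ast}}$; the needed fact follows directly by iterating the inclusion $\mathcal{T}(Z_{\gamma})\subset Z_{\delta}$ for $\gamma<\delta$ along your partition.
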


\begin{proof}
We fix a partition of the interval $\left[ \alpha ,\beta \right] $ in $n$
intervals $\left[ \psi _{k},\psi _{k+1}\right] ,~k=0,...,n-1$, $\psi
_{0}=\alpha $, $\psi _{n}=\beta $, of equal length $\frac{\beta -\alpha }{n}$%
. Then, iterating estimate (\ref{2ndIntMapLCondition}), we obtain%
\begin{multline}
\mathbb{E}\left[ ||\mathcal{T(T}^{n-1}(\xi _{1}))(t)-\mathcal{T(T}^{n-1}(\xi
_{2}))(t)||_{X_{\beta }}^{p}\right]  \\
\leq \frac{\hat{L}(T)n^{p/q}}{(\beta -\alpha )^{p/q}}\int_{0}^{t}\mathbb{E~}%
||\mathcal{T}^{n-1}(\xi _{1}(s))-\mathcal{T}^{n-1}(\xi _{2}(s))||_{X_{\alpha
}}^{p}ds \\
\leq ...\leq \left[ \frac{\hat{L}(T)n^{p/q}}{(\beta -\alpha )^{p/q}}\right]
^{n}\int_{0}^{t}\int_{0}^{t_{1}}...\int_{0}^{t_{n-1}}\mathbb{E~}||\xi
_{1}(s)-\xi _{2}(s)||_{X_{\alpha }}^{p}dsdt_{n-1}...dt_{1},
\end{multline}%
and the result follows.
\end{proof}

\begin{corollary}
For any $n\in \mathbb{N}$, $\alpha <\beta $ and $\xi \in Z_{\alpha }$ we
have the estimates%
\begin{multline}
\mathbb{E}\left[ ||\mathcal{T}^{n}(\xi )(t)-\mathcal{T}^{n+1}(\xi
)(t)||_{X_{\beta }}^{p}\right]  \label{n-est0} \\
\leq \left[ \frac{\hat{L}(T)n^{p/q}}{(\beta -\alpha )^{p/q}}\right]
^{n}\int_{0}^{t}\int_{0}^{t_{1}}...\int_{0}^{t_{n-1}}\mathbb{E~}||\xi (s)-%
\mathcal{T(}\xi (s))||_{X_{\alpha }}^{p}dsdt_{n-1}...dt_{1}
\end{multline}%
and%
\begin{equation}
||\mathcal{T}^{n}(\xi )-\mathcal{T}^{n+1}(\xi )||_{Z_{\beta }}^{p}\leq \frac{%
n^{np/q}}{n!}\left( \frac{\hat{L}(T)T}{(\beta -\alpha )^{p/q}}\right)
^{n}||\xi -\mathcal{T}(\xi )||_{Z_{\alpha }}^{p}.  \label{n-est1}
\end{equation}
\end{corollary}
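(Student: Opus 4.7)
The plan is to deduce both estimates by applying the preceding lemma to the specific pair $\xi_1:=\xi$ and $\xi_2:=\mathcal{T}(\xi)$. With this substitution $\mathcal{T}^n(\xi_1)=\mathcal{T}^n(\xi)$ and $\mathcal{T}^n(\xi_2)=\mathcal{T}^{n+1}(\xi)$, so estimate (\ref{n-est1}) is an immediate transcription of (\ref{n-est}).

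For the sharper pointwise-in-$t$ form (\ref{n-est0}) I would retrace the chain of inequalities that produced (\ref{n-est}) but stop one step short of the final integration. With the same equidistant partition $\psi_0=\alpha<\psi_1<\dots<\psi_n=\beta$ of step $(\beta-\alpha)/n$, $n$-fold iteration of (\ref{2ndIntMapLCondition}) through these indices yields
\[
\mathbb{E}\left[ ||\mathcal{T}^n(\xi_1)(t)-\mathcal{T}^n(\xi_2)(t)||_{X_\beta}^p\right] \leq \left[\frac{\hat L(T)n^{p/q}}{(\beta-\alpha)^{p/q}}\right]^{n} \int_0^t\int_0^{t_1}\cdots\int_0^{t_{n-1}} \mathbb{E}\,||\xi_1(s)-\xi_2(s)||_{X_\alpha}^p \,ds\,dt_{n-1}\cdots dt_1,
\]
which, under the substitution above, is exactly (\ref{n-est0}). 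Bounding the iterated time integral by $T^n/n!$ and then taking the supremum over $t\in[0,T]$ recovers (\ref{n-est1}), providing an alternative derivation of that inequality directly from (\ref{n-est0}).

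The only point calling for comment, and the main (minor) obstacle, is that the preceding lemma is formulated for $\xi_1,\xi_2\in Z_\alpha$, whereas the earlier theorem only guarantees $\mathcal{T}(\xi)\in Z_{\psi_1}$ rather than $\mathcal{T}(\xi)\in Z_\alpha$. This is handled by observing that the iterative scheme never requires $\mathcal{T}(\xi)\in Z_\alpha$ in its full strength: at the $k$-th step (\ref{2ndIntMapLCondition}) is applied only between consecutive partition indices $\psi_{k-1}<\psi_k$, and each intermediate process $\mathcal{T}^j(\xi)$ lies in $Z_{\psi_j}$ by repeated use of the theorem, so every individual step is well defined. Should $||\xi-\mathcal{T}(\xi)||_{Z_\alpha}$ fail to be finite the resulting estimates are vacuously true; in the Picard-type application in Section \ref{proof}, where $\xi\in Z_{\alpha_*}$ is taken to be a constant process and $\mathcal{T}(\xi)\in Z_\alpha$ for every $\alpha>\alpha_*$, this quantity is finite and the bound is effective.
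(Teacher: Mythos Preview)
Your proposal is correct and matches the paper's intended approach: the corollary is stated without proof in the paper, and the substitution $\xi_1=\xi$, $\xi_2=\mathcal{T}(\xi)$ into the preceding lemma (together with its intermediate iterated-integral display) is precisely what is meant. Your additional remark on the membership issue---that $\mathcal{T}(\xi)$ need not lie in $Z_\alpha$ a priori, so the right-hand side of (\ref{n-est1}) may be infinite and the estimate vacuous unless $\xi\in Z_{\alpha'}$ for some $\alpha'<\alpha$---is a valid clarification that the paper itself omits.
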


\begin{lemma}
\label{SecondCauchyLemma} Suppose $\alpha <\beta \in \mathcal{A}$ and $\xi
\in Z_{\alpha _{\ast }}$. For all $m,n\in \mathbb{N}$, $m>n$, the following
inequality holds%
\begin{equation}
||\mathcal{T}^{n}(\xi )-\mathcal{T}^{m}(\xi )||_{Z_{\beta }}\leq ||\xi -%
\mathcal{T}(\xi )||_{Z_{\alpha }}\sum_{k=n}^{m}\frac{\sqrt[p]{\hat{L}%
(T)^{k}T^{k}}}{(\beta -\alpha )^{k/q}}\frac{k^{\theta k}}{\sqrt[p]{k!}}.
\label{cauchy-est}
\end{equation}
\end{lemma}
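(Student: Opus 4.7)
The plan is to deduce (\ref{cauchy-est}) from the one-step estimate (\ref{n-est1}) via a telescoping argument. First I would observe that, since $\alpha _{\ast }\leq \alpha $, the scale property (\ref{scale}) applied at the level of the Banach scale $\mathfrak{Z}^{p}$ gives the continuous embedding $Z_{\alpha _{\ast }}\subset Z_{\alpha }$, so that $\xi \in Z_{\alpha }$; combined with $\mathcal{T}(\xi )\in Z_{\alpha }$ (which is Corollary \ref{CompositionLemma} applied with $n=1$), this ensures that the quantity $\|\xi -\mathcal{T}(\xi )\|_{Z_{\alpha }}$ appearing on the right-hand side is well-defined.

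Next, I would write the telescoping identity
\[
\mathcal{T}^{n}(\xi )-\mathcal{T}^{m}(\xi )=\sum_{k=n}^{m-1}\bigl(\mathcal{T}^{k}(\xi )-\mathcal{T}^{k+1}(\xi )\bigr)
\]
and apply the triangle inequality in $Z_{\beta }$, reducing the task to bounding each single-step difference $\|\mathcal{T}^{k}(\xi )-\mathcal{T}^{k+1}(\xi )\|_{Z_{\beta }}$. For this I would use (\ref{n-est1}) with the same pair $\alpha <\beta $: taking $p$-th roots there gives
\[
\|\mathcal{T}^{k}(\xi )-\mathcal{T}^{k+1}(\xi )\|_{Z_{\beta }}\leq \frac{k^{k/q}}{\sqrt[p]{k!}}\cdot \frac{\sqrt[p]{\hat{L}(T)^{k}T^{k}}}{(\beta -\alpha )^{k/q}}\,\|\xi -\mathcal{T}(\xi )\|_{Z_{\alpha }},
\]
which identifies the (otherwise unspecified) exponent in (\ref{cauchy-est}) as $\theta =1/q$. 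Summing these contributions over $k=n,\dots ,m-1$ and then bounding that sum by the one running from $n$ to $m$ (legitimate since all summands are nonnegative) yields (\ref{cauchy-est}).

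I do not foresee a serious obstacle: once (\ref{n-est1}) is in hand the lemma is essentially a clean bookkeeping exercise. The only point that needs some care is reconciling the hypothesis $\xi \in Z_{\alpha _{\ast }}$ with the appearance of the $Z_{\alpha }$-norm on the right-hand side, which is handled by the scale embedding $Z_{\alpha _{\ast }}\subset Z_{\alpha }$, and then matching the $p$-th-root exponents so that the factor $k^{\theta k}/\sqrt[p]{k!}$ emerges with $\theta =1/q$.
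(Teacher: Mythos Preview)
Your proposal is correct and follows essentially the same route as the paper: telescope $\mathcal{T}^{n}(\xi)-\mathcal{T}^{m}(\xi)$ into consecutive differences, apply the one-step bound (\ref{n-est1}) to each, and sum. You are in fact a bit more careful than the paper, spelling out why $\|\xi-\mathcal{T}(\xi)\|_{Z_{\alpha}}$ is well-defined via the embedding $Z_{\alpha_{\ast}}\subset Z_{\alpha}$ and making explicit the identification $\theta=1/q$.
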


\begin{proof}
We have%
\begin{multline*}
||\mathcal{T}^{n}(\xi )-\mathcal{T}^{m}(\xi )||_{Z_{\beta }}\leq
\sum_{k=n}^{m-1}||\mathcal{T}^{k}(\xi )-\mathcal{T}^{k+1}(\xi )||_{Z_{\beta
}} \\
\leq \sum_{k=n}^{m-1}\frac{k^{n/q}}{\left( k!\right) ^{1/p}}\left( \frac{%
\hat{L}(T)T}{(\beta -\alpha )^{p/q}}\right) ^{n/p}||\xi -\mathcal{T}(\xi
)||_{Z_{\alpha }}.
\end{multline*}%
The result is proved.
\end{proof}

Finally, we prove regularity of the right-hand side of (\ref{cauchy-est}).
In what follows, we will use the notation%
\begin{equation}
E^{(p)}(t,\varepsilon ,\theta ):=1+\sum_{n=1}^{\infty }\frac{t^{n}}{%
\varepsilon ^{\theta n}}\frac{n^{\theta n}}{\left( n!\right) ^{1/p}}
\label{E-func}
\end{equation}%
Observe that for $p=1$ and $\theta =0$ the right-hand side of (\ref{E-func})
reduces to an exponential series, so that $E^{(1)}(c,\varepsilon ,0)=e^{c}$.

\begin{lemma}
\label{SeriesLemma} For any $t,p,\varepsilon >0$ and $\theta \in \lbrack 0,%
\frac{1}{p})$ we have 
\begin{equation*}
E^{(p)}(t,\varepsilon ,\theta )<\infty .
\end{equation*}
\end{lemma}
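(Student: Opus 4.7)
The plan is to establish convergence of the series defining $E^{(p)}(t,\varepsilon,\theta)$ by a direct application of the root test, with Stirling's formula used to bring the factorial into a form directly comparable to the numerator $n^{\theta n}$.

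Write $a_n := \frac{t^n}{\varepsilon^{\theta n}} \cdot \frac{n^{\theta n}}{(n!)^{1/p}}$, so that $E^{(p)}(t,\varepsilon,\theta) = 1 + \sum_{n=1}^{\infty} a_n$. The crucial observation is that although $n^{\theta n}$ grows super-exponentially, the factor $(n!)^{1/p}$ grows like $n^{n/p}$, and our hypothesis $\theta < 1/p$ ensures that the denominator wins. Quantitatively, Stirling's inequality
$$n! \;\geq\; C\left(\frac{n}{e}\right)^{n}\sqrt{n}$$
for some absolute constant $C>0$ gives
$$(n!)^{1/p} \;\geq\; C^{1/p}\left(\frac{n}{e}\right)^{n/p} n^{1/(2p)}.$$

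Substituting this bound, we obtain
$$a_n \;\leq\; C^{-1/p}\left(\frac{t\,e^{1/p}}{\varepsilon^{\theta}}\right)^{n} n^{\,n(\theta - 1/p)} n^{-1/(2p)}.$$
Taking $n$-th roots yields
$$a_n^{1/n} \;\leq\; C^{-1/(np)}\cdot \frac{t\,e^{1/p}}{\varepsilon^{\theta}}\cdot n^{\theta - 1/p} \cdot n^{-1/(2np)},$$
and since $\theta - 1/p < 0$ by hypothesis, the right-hand side tends to $0$ as $n \to \infty$. Thus $\limsup_{n\to\infty} a_n^{1/n} = 0 < 1$, and the root test gives absolute convergence of the series.

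I do not anticipate any real obstacle: the estimate $\theta < 1/p$ is used in exactly the place one expects, namely to guarantee that the super-exponentially decaying factor $n^{n(\theta - 1/p)}$ overwhelms the geometric factor $(t e^{1/p}/\varepsilon^{\theta})^{n}$. The degenerate case $\theta = 0$ (where the series reduces to the ordinary exponential-type series $\sum t^n/(n!)^{1/p}$, entire in $t$) is covered by the same argument without modification.
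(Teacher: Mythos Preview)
Your proof is correct. The paper establishes the same convergence via the ratio test rather than the root test: it computes
\[
\frac{a_{n+1}}{a_n}=\frac{t}{\varepsilon^{\theta}}\left(1+\frac{1}{n}\right)^{\theta n}(n+1)^{\theta-1/p}\longrightarrow \frac{t}{\varepsilon^{\theta}}\,e^{\theta}\cdot 0=0,
\]
using only the elementary limit $(1+1/n)^{\theta n}\to e^{\theta}$ and the hypothesis $\theta<1/p$. Your route via Stirling's inequality and the root test is equally valid and makes the comparison $n^{\theta n}$ versus $(n!)^{1/p}\sim n^{n/p}$ fully explicit, at the cost of invoking Stirling; the paper's ratio-test argument is slightly more elementary since the factorial disappears in the quotient $a_{n+1}/a_n$ without any asymptotic estimate. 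Both arguments identify the same mechanism, namely that the factor $n^{\theta-1/p}$ (in the ratio) or $n^{n(\theta-1/p)}$ (in the root) drives the relevant limit to zero.
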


\begin{proof}
By analyzing the ratio of terms of series (\ref{E-func}) we get 
\begin{multline*}
\lim_{n\rightarrow \infty }\frac{\frac{t^{\left( n+1\right) }}{\varepsilon
^{\theta (n+1)}}\frac{\left( n+1\right) ^{\theta n}}{\left( (n+1)!\right)
^{1/p}}}{\frac{t^{n}}{\varepsilon ^{\theta n}}\frac{n^{\theta n}}{\left(
n!\right) ^{1/p}}}=\lim_{n\rightarrow \infty }\frac{t}{\varepsilon ^{\theta }%
}(n+1)^{\theta n+\theta -\frac{1}{p}}\frac{1}{n^{\theta n}}, \\
=\lim_{n\rightarrow \infty }\frac{t}{\varepsilon ^{\theta }}\left( 1+\frac{1%
}{n}\right) _{.}^{\theta n}(n+1)^{\theta -\frac{1}{p}} \\
=\frac{t}{\varepsilon ^{\theta }}e^{\theta }\lim_{n\rightarrow \infty
}(n+1)^{\theta -\frac{1}{p}}=0,
\end{multline*}

provided $\theta -\frac{1}{p}<0,$which proves the result.
\end{proof}

\begin{corollary}
\label{SequenceLemma} For any $t,p>0$ and $\theta \in \lbrack 0,\frac{1}{p})$
we have 
\begin{equation*}
\lim_{n\rightarrow \infty }\frac{t^{n}}{\varepsilon ^{\theta n}}\frac{%
n^{\theta n}}{\left( n!\right) ^{1/p}}=0.
\end{equation*}
\end{corollary}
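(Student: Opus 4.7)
The plan is to obtain this as an immediate consequence of Lemma \ref{SeriesLemma}. Indeed, the general term of a convergent series of nonnegative reals must tend to zero, so once we know that
\begin{equation*}
E^{(p)}(t,\varepsilon ,\theta )=1+\sum_{n=1}^{\infty }\frac{t^{n}}{\varepsilon ^{\theta n}}\frac{n^{\theta n}}{\left( n!\right) ^{1/p}}<\infty
\end{equation*}
for $\theta \in [0,1/p)$, the conclusion follows with no further work.

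Concretely, I would fix $t,\varepsilon >0$ and $\theta \in [0,1/p)$, note that each summand $a_{n}:=t^{n}\varepsilon ^{-\theta n}n^{\theta n}/(n!)^{1/p}$ is nonnegative, and invoke Lemma \ref{SeriesLemma} to deduce $\sum_{n}a_{n}<\infty $; then $a_{n}\to 0$ as $n\to \infty $, which is exactly the claim.

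If a self-contained argument were preferred over citing Lemma \ref{SeriesLemma}, one could simply re-run the ratio computation already carried out in its proof: that calculation shows $a_{n+1}/a_{n}\to 0$ under the hypothesis $\theta <1/p$, and the ratio test for sequences (with limit ratio strictly less than $1$) then yields $a_{n}\to 0$. There is no real obstacle here; the only delicate point is the strict inequality $\theta <1/p$, which is needed to drive the factor $(n+1)^{\theta -1/p}$ to zero, exactly as in Lemma \ref{SeriesLemma}. The statement would fail at $\theta =1/p$, so this hypothesis is essential and should be highlighted.
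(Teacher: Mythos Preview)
Your proposal is correct and matches the paper's approach exactly: the paper states this result as an immediate corollary of Lemma~\ref{SeriesLemma} with no separate proof, relying precisely on the fact that the general term of a convergent series tends to zero. Your optional self-contained alternative via the ratio computation is also sound and is in fact just the argument inside Lemma~\ref{SeriesLemma} itself.
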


\begin{corollary}
\label{conv}We have 
\begin{equation*}
\lim_{n\rightarrow \infty }\sum_{k=n}^{m}\frac{\sqrt[p]{\hat{L}(T)^{k}T^{k}}%
}{(\beta -\alpha )^{k/q}}\frac{k^{\theta k}}{\sqrt[p]{k!}}=0
\end{equation*}%
for any $\alpha <\beta $ and $q>p$.
\end{corollary}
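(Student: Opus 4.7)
The plan is to recognize that the double sum in the statement is precisely a tail of the series whose convergence is established in Lemma \ref{SeriesLemma}, so the corollary follows from the Cauchy criterion applied to that series.

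First I would rewrite the summand in the form appearing in the definition of $E^{(p)}$ in (\ref{E-func}). Setting
\begin{equation*}
t := \sqrt[p]{\hat{L}(T)\,T}, \qquad \varepsilon := \beta - \alpha, \qquad \theta := \tfrac{1}{q},
\end{equation*}
we have
\begin{equation*}
\frac{\sqrt[p]{\hat{L}(T)^{k}T^{k}}}{(\beta-\alpha)^{k/q}}\frac{k^{\theta k}}{\sqrt[p]{k!}}
= \frac{t^{k}}{\varepsilon^{\theta k}}\frac{k^{\theta k}}{(k!)^{1/p}},
\end{equation*}
so the expression inside the limit coincides with a partial tail of the series defining $E^{(p)}(t,\varepsilon,\theta)$.

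Next I would verify that the hypotheses of Lemma \ref{SeriesLemma} are satisfied. The assumption $q>p$ immediately gives $\theta = 1/q < 1/p$, which is exactly the range required in that lemma. Consequently $E^{(p)}(t,\varepsilon,\theta)<\infty$, i.e.\ the series $\sum_{k\geq 1}\frac{t^{k}}{\varepsilon^{\theta k}}\frac{k^{\theta k}}{(k!)^{1/p}}$ converges.

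Finally I would invoke the Cauchy criterion: for any convergent series of nonnegative terms, the partial sums $\sum_{k=n}^{m}$ tend to $0$ as $n\to\infty$, uniformly in $m\geq n$. This yields the claimed limit and finishes the proof. There is no real obstacle here; the only point to be careful about is matching the parameter $\theta$ with $1/q$ (implicit in the preceding lemma) and checking that the exponent condition $\theta<1/p$ in Lemma \ref{SeriesLemma} translates exactly into the standing assumption $q>p$ of the corollary.
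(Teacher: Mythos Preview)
Your proof is correct and follows exactly the approach the paper intends: the corollary is stated immediately after Lemma~\ref{SeriesLemma} and Corollary~\ref{SequenceLemma} without proof, because identifying the sum as a tail of the convergent series $E^{(p)}(t,\varepsilon,\theta)$ with $t=\sqrt[p]{\hat{L}(T)T}$, $\varepsilon=\beta-\alpha$, $\theta=1/q<1/p$ makes the conclusion immediate. Your verification that the hypothesis $q>p$ translates to $\theta<1/p$ is precisely the point being used.
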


\section{Proof of the existence and uniqueness\label{proof}}

\label{EU} We now prove an important result which will immediately allow us
to establish Theorem \ref{KeyResult}.

\begin{theorem}
\ \label{MainEUTheorem} There exists a unique element $\xi _{0}\in 
\underline{\,Z}$ such that for all $t\in \mathcal{T}$ we have $\mathcal{T}%
(\xi _{0})(t)=\xi _{0}(t)$ almost everywhere. Moreover and for all $\xi \in
Z_{\alpha _{\ast }}$ 
\begin{equation*}
\lim_{n\rightarrow \infty }\mathcal{T}^{n}(\xi )=\xi _{0},
\end{equation*}%
is true in $Z_{\alpha }$ for all $\alpha \in (\alpha _{\ast },\overline{%
\alpha })$.
\end{theorem}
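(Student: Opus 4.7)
The plan is to build $\xi_{0}$ as the limit of the Picard iterates $\mathcal{T}^{n}(\xi)$ starting from an arbitrary $\xi \in Z_{\alpha _{\ast}}$, using the scale estimates developed in Section \ref{estimates}. Fix some $p\in [2,q)$, which is possible because $q>2$, so that Corollary \ref{conv} applies. By Corollary \ref{CompositionLemma}, every iterate $\mathcal{T}^{n}(\xi)$ lies in $Z_{\alpha}$ for every $\alpha>\alpha _{\ast}$, so all the quantities appearing below are finite.

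First I would establish the Cauchy property in $Z_{\alpha}$ for an arbitrary $\alpha\in(\alpha _{\ast},\overline{\alpha})$. Pick an auxiliary $\alpha'\in(\alpha _{\ast},\alpha)$ and apply Lemma \ref{SecondCauchyLemma} with the pair $(\alpha',\alpha)$ in place of $(\alpha,\beta)$: for $m>n$,
\begin{equation*}
\|\mathcal{T}^{n}(\xi)-\mathcal{T}^{m}(\xi)\|_{Z_{\alpha}}\leq \|\xi-\mathcal{T}(\xi)\|_{Z_{\alpha'}}\sum_{k=n}^{m}\frac{\sqrt[p]{\hat{L}(T)^{k}T^{k}}}{(\alpha-\alpha')^{k/q}}\frac{k^{\theta k}}{\sqrt[p]{k!}},
\end{equation*}
with $\theta=1/q<1/p$ since $q>p$. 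The right-hand side vanishes as $n\to\infty$ by Corollary \ref{conv}, and because $\xi\in Z_{\alpha _{\ast}}$ implies $\xi,\mathcal{T}(\xi)\in Z_{\alpha'}$ the prefactor is finite. Thus $\{\mathcal{T}^{n}(\xi)\}$ is Cauchy in the Banach space $Z_{\alpha}$; call its limit $\xi_{0}(\alpha)$. Standard uniqueness of limits, together with the scale embedding $Z_{\alpha_{1}}\hookrightarrow Z_{\alpha_{2}}$ for $\alpha_{1}<\alpha_{2}$, shows that $\xi_{0}(\alpha)$ is actually one and the same element of $\underline{Z}$, independent of $\alpha\in(\alpha _{\ast},\overline{\alpha})$.

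Next I would verify that $\xi_{0}$ is a fixed point of $\mathcal{T}$. Given $\alpha\in(\alpha _{\ast},\overline{\alpha})$ and any $\beta\in(\alpha,\overline{\alpha})$, the bound (\ref{LipZ}) says that $\mathcal{T}\colon Z_{\alpha}\to Z_{\beta}$ is Lipschitz with constant $\sqrt[p]{\hat{L}(T)T}/(\beta-\alpha)^{1/q}$. Hence $\mathcal{T}^{n+1}(\xi)=\mathcal{T}(\mathcal{T}^{n}(\xi))\to\mathcal{T}(\xi_{0})$ in $Z_{\beta}$; but also $\mathcal{T}^{n+1}(\xi)\to\xi_{0}$ in $Z_{\alpha}$, hence in $Z_{\beta}$ by the scale inequality $\|\cdot\|_{Z_{\beta}}\leq\|\cdot\|_{Z_{\alpha}}$. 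Uniqueness of limits in $Z_{\beta}$ gives $\xi_{0}=\mathcal{T}(\xi_{0})$ as elements of $Z_{\beta}$, which as processes means equality a.e. on $[0,T]\times\Omega$. Since $\beta$ was arbitrary in $(\alpha,\overline{\alpha})$, the identity holds throughout $\underline{Z}$.

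For uniqueness, suppose $\xi_{0},\eta_{0}\in\underline{Z}$ both satisfy $\mathcal{T}(\,\cdot\,)=\,\cdot\,$ a.e.\ on $[0,T]\times\Omega$. Then for any $\alpha<\beta\in\mathcal{A}$ the iteration estimate (\ref{n-est}) yields
\begin{equation*}
\|\xi_{0}-\eta_{0}\|_{Z_{\beta}}=\|\mathcal{T}^{n}(\xi_{0})-\mathcal{T}^{n}(\eta_{0})\|_{Z_{\beta}}\leq \frac{n^{n/q}}{(n!)^{1/p}}\left(\frac{\hat{L}(T)T}{(\beta-\alpha)^{p/q}}\right)^{n/p}\|\xi_{0}-\eta_{0}\|_{Z_{\alpha}},
\end{equation*}
and the right-hand side tends to $0$ as $n\to\infty$ by Corollary \ref{SequenceLemma} (with $\theta=1/q<1/p$), so $\xi_{0}=\eta_{0}$ in $Z_{\beta}$, and hence in $\underline{Z}$. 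The same argument applied to an arbitrary pair $\xi,\eta\in Z_{\alpha _{\ast}}$ shows the convergence $\mathcal{T}^{n}(\xi)\to\xi_{0}$ in every $Z_{\alpha}$, $\alpha\in(\alpha _{\ast},\overline{\alpha})$, because the limit of $\mathcal{T}^{n}(\eta)$ for the canonical choice $\eta=\xi_{0}$ equals $\xi_{0}$. The main delicate point, anticipated in the planning, is the careful selection of an intermediate index $\alpha'$ strictly between $\alpha _{\ast}$ and the target $\alpha$, so that Lemma \ref{SecondCauchyLemma} applies with a strictly positive denominator $\alpha-\alpha'$; once this is in place, the rest is bookkeeping with the scale embeddings and the summability bounds of Lemma \ref{SeriesLemma}.
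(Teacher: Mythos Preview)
Your proof is correct and follows essentially the same route as the paper: Cauchy property via Lemma \ref{SecondCauchyLemma} and Corollary \ref{conv}, fixed-point property via the Lipschitz continuity (\ref{LipZ}) of $\mathcal{T}$ between adjacent levels of the scale, and uniqueness via the iterated estimate (\ref{n-est}) combined with Corollary \ref{SequenceLemma}. You are slightly more explicit than the paper about the choice of an intermediate index and about why the limit is independent of $\alpha$. One small wrinkle: in your final sentence you invoke ``the canonical choice $\eta=\xi_{0}$'', but $\xi_{0}$ is only known to lie in $\underline{Z}$, not in $Z_{\alpha_{\ast}}$; the cleaner argument (which you already have) is that for any other $\eta\in Z_{\alpha_{\ast}}$ the Cauchy argument yields a fixed point, which then coincides with $\xi_{0}$ by the uniqueness you just proved.
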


\begin{proof}
Let us fix $\xi \in Z_{\alpha _{\ast }}$. Lemma \ref{SecondCauchyLemma} and
Corollary \ref{conv} show that the sequence $\{\mathcal{T}^{n}(\xi
)\}_{n=1}^{\infty }$ is Cauchy in $Z_{\beta }$ and therefore converges in $%
Z_{\beta }$ for all $\beta >\alpha _{\ast }$. Thus there exists $\xi _{0}\in 
\underline{Z}=\cap _{\beta >\alpha _{\ast }}Z_{\beta }$ such that 
\begin{equation*}
\lim_{n\rightarrow \infty }\mathcal{T}^{n}(\xi )=\xi _{0},
\end{equation*}%
where the convergence takes place in $Z_{\beta }$ for all $\beta >\alpha
_{\ast }$.

We can now fix arbitrary $\delta <\beta $ and observe that $\mathcal{T}%
:Z_{\delta }\rightarrow Z_{\beta }$ is continuous. Therefore, passing to the
limit in both sides of the equality 
\begin{equation*}
\mathcal{T}(\mathcal{T}^{n}(\xi ))=\mathcal{T}^{n+1}(\xi )\in Z_{\beta }
\end{equation*}
we can conclude that 
\begin{equation*}
\mathcal{T}(\xi _{0})=\xi _{0}\text{ in }Z_{\beta }\text{ for any }\beta \in 
\mathcal{A},
\end{equation*}%
which implies that for all $t\in \mathcal{T}$ we have $\mathcal{T}(\xi
_{0})(t)=\xi _{0}(t)$ almost everywhere.

Finally, suppose there exists another element $\eta _{0}\in \underline{\,Z}$
such that for all $t\in \mathcal{T}$ we have $\mathcal{T}(\eta _{0})(t)=\eta
_{0}(t)$ almost everywhere. Then we see from the inequality (\ref{n-est})
that, 
\begin{equation*}
||\xi _{0}-\eta _{0}||_{Z_{\beta }}^{p}=||\mathcal{T}^{n}(\xi _{0})-\mathcal{%
T}^{n}(\eta _{0})||_{Z_{\beta }}^{p}\leq \frac{\left( a_{p}LT\right) ^{n}}{%
(\alpha -\phi _{1})^{\theta n}}\frac{n^{\theta n}}{\sqrt[2p]{n!}}||\xi
_{0}-\eta _{0}||_{Z_{\alpha }}\rightarrow 0,\ n\rightarrow \infty .
\end{equation*}%
Thus $||\xi _{0}-\eta _{0}||_{Z_{\beta }}\ =0$ for any $\beta \in \mathcal{A}
$. Hence $\xi _{0}$ is unique and the proof is complete.
\end{proof}

\textbf{The proof of the first two statements of Theorem \ref{KeyResult}}
follows immediately from Theorem \ref{MainEUTheorem} above by letting $\xi
\equiv u_{0}$.

\section{Continuity of the solution\label{contin}}

Let $\xi (t),t>0$, be the solution of equation (\ref{SDE-int}) constructed
in Theorems \ref{MainEUTheorem} and \ref{KeyResult}.

\begin{theorem}
For any $\alpha \in \mathcal{A}$, process $\xi $ has a continuous
modification $\eta (t)\in X_{\alpha }$, $t\in \lbrack 0,T]$, which solves
equation (\ref{SDE-int}).
\end{theorem}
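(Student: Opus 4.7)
The plan is to verify the hypothesis of Kolmogorov's continuity criterion: it suffices to establish an estimate of the form
\begin{equation*}
\mathbb{E} \|\xi(t) - \xi(s)\|_{X_\alpha}^p \leq C |t - s|^{1 + \delta}, \quad 0 \leq s \leq t \leq T,
\end{equation*}
for some exponent $p \geq 2$ and some $\delta > 0$. This will supply a H\"older continuous modification $\eta$ of $\xi$ taking values in $X_\alpha$; because $\eta(t) = \xi(t)$ a.s.\ for each fixed $t$, both sides of (\ref{SDE-int}) agree a.s.\ with $\eta$ in place of $\xi$ (the stochastic integrals coincide as processes, since $B(\xi)$ and $B(\eta)$ are equal on a set of full $dt\otimes dP$-measure).

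Fix $\alpha \in \mathcal{A}$ with $\alpha > \alpha_*$, and choose auxiliary parameters $\alpha' \in (\alpha_*,\alpha)$ and $p \in (2,q)$; such a $p$ exists because $q>2$. By statement (2) of Theorem \ref{KeyResult}, already proved, $\xi \in Z_{\alpha'}^{p}$, so
\begin{equation*}
M := \sup_{r \in [0,T]} \mathbb{E}\|\xi(r)\|_{X_{\alpha'}}^{p} < \infty.
\end{equation*}
Starting from the integral equation, split $\xi(t)-\xi(s)$ into its drift and diffusion parts via $(a+b)^p \leq 2^{p-1}(a^p+b^p)$; then apply H\"older's inequality to the drift integral (producing a factor $|t-s|^{p-1}$) and the It\^o moment inequality recalled in (\ref{2ndIntMapLCondition}) to the stochastic integral (producing a factor $|t-s|^{p/2-1}$).

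Next, plug in the linear growth bound (\ref{gc}) for $f$ and $B$ applied with the pair $\alpha' < \alpha$: this contributes a prefactor $(\alpha-\alpha')^{-p/q}$ and leaves time integrands bounded by $C(1+\mathbb{E}\|\xi(r)\|_{X_{\alpha'}}^{p}) \leq C(1+M)$. Collecting the factors gives
\begin{equation*}
\mathbb{E}\|\xi(t)-\xi(s)\|_{X_\alpha}^{p} \leq C\left(|t-s|^{p} + |t-s|^{p/2}\right),
\end{equation*}
which, since $p/2 > 1$, is precisely what Kolmogorov's criterion demands.

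The main point I anticipate is the necessity of choosing $p > 2$ strictly, which is possible only because $q > 2$; had we merely $q = 2$ as in \cite{Dal}, the exponent on $|t-s|$ coming from the stochastic integral could not be made strictly larger than $1$, and the criterion would fail. The argument does not cover the boundary value $\alpha = \alpha_*$ (there is no $\alpha' \in (\alpha_*,\alpha)$), but this is irrelevant because $\xi$ is only known to take values in $X_\beta$ for $\beta > \alpha_*$.
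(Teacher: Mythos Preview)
Your proposal is correct and follows essentially the same route as the paper: both verify Kolmogorov's criterion by writing $\xi(t)-\xi(s)$ via the integral equation, applying H\"older to the drift and the $p$-th moment \ito\ inequality to the stochastic part, and then invoking the growth bound for $f$ and $B$ across a pair of indices in the scale to arrive at $\mathbb{E}\|\xi(t)-\xi(s)\|_{X_\alpha}^p\le C|t-s|^{p/2}$. The only cosmetic difference is in the last step: the paper shows $\mathcal{T}(\xi)(t)=\mathcal{T}(\eta)(t)$ a.s.\ by invoking the Lipschitz estimate (\ref{LipZ}) together with $\|\xi-\eta\|_{Z_{\alpha,T}^p}=0$, whereas you argue directly that the integrands coincide $dt\otimes dP$-a.e.; both are valid. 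Your explicit remark that one must choose $p>2$ strictly (possible since $q>2$) is in fact a point the paper's proof leaves implicit.
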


\begin{proof}
Let us fix any $\beta \in \left( \alpha _{\ast },\alpha ^{\ast }\right] $
and $p\in \left[ 2,q\right) $. We can prove the existence of a continuous
modification by an application of Kolmogorov's continuity theorem in a
rather standard way. Indeed, using (\ref{SDE-int}) and the arguments similar
to those used in the proof of (\ref{EqAux1}) and (\ref{2ndIntMapLCondition})
we obtain%
\begin{eqnarray*}
\mathbb{E}||\xi (t)-\xi (s)||_{X_{\beta }}^{p} &\leq &\mathbb{E}\left[
||\int_{s}^{t}F(\xi (\tau ))d\tau +\int_{s}^{t}\Phi (\xi (\tau ))dW(\tau
)||_{X_{\alpha }}^{p}\right]  \\
&\leq &\frac{C(t-s)}{(\beta -\alpha )^{p\theta }}\left\Vert \xi \right\Vert
_{Z_{a,T}^{p}}^{p},\ 0\leq s<t\leq T,
\end{eqnarray*}%
where, for $\tau >0$,%
\begin{equation*}
C(\tau )=(\tau ^{p}+\left[ \frac{p}{2}(p-1)\right] ^{p/2}\tau
^{p/2})2^{p-1}L^{p}\leq (T^{p/2}+\left[ \frac{p}{2}(p-1)\right]
^{p/2})2^{p-1}L^{p}~\tau ^{p/2}.
\end{equation*}%
So we obtain the estimate%
\begin{equation*}
\mathbb{E}||\xi (t)-\xi (s)||_{X_{\beta }}^{p}=k(\xi ,T)\left\vert
t-s\right\vert ^{p/2}
\end{equation*}%
with $k(\xi ,T)=(T^{p/2}+\left[ \frac{p}{2}(p-1)\right] ^{p/2})2^{p-1}L^{p}%
\left\Vert \xi \right\Vert _{Z_{a}}^{p}$. The existence of a continuous
modification $\eta (t)\in X_{\beta }$ follows from Kolmogorov's continuity
theorem.

Since $\xi $ satisfies (\ref{SDE-int}) and $\eta (t)=\xi (t)$ a.s. we have%
\begin{equation*}
\eta (t)=\mathcal{T(\xi )(}t\mathcal{)}\text{ a.s.}
\end{equation*}%
Observe now that by (\ref{LipZ}) we have 
\begin{equation*}
\mathbb{E}\left[ ||\mathcal{T(\xi )(}t\mathcal{)-T(\eta )(}t\mathcal{)}%
||_{X_{\beta }}^{p}\right] \leq \frac{C(t)}{(\beta -\alpha )^{p/q}}%
\left\Vert \xi -\eta \right\Vert _{Z_{a,T}^{p}}^{p}=0,\ 0\leq t\leq T,
\end{equation*}%
which implies that $\mathcal{T(\xi )(}t\mathcal{)=T(\eta )(}t\mathcal{)}$
a.s. So we proved that%
\begin{equation*}
\eta (t)=\mathcal{T(}\eta \mathcal{)(}t\mathcal{)}\text{ a.s.}
\end{equation*}%
This equality holds in $X_{\beta }$ for any $\beta \in \left( \alpha _{\ast
},\alpha ^{\ast }\right] $. The proof is complete.
\end{proof}

\begin{remark}
Observe that $\left\Vert \xi -\eta \right\Vert _{Z_{\alpha ,T}^{p}}=0$ for
any $\alpha $, so the processes $\xi $ and $\eta $ coincide as elements of $%
\underline{Z_{T}^{p}}$.
\end{remark}

\section{Estimate of the solution\label{estofsol}}

\label{ES}

In this section, we derive a norm estimate of the solution $\xi $ from
Theorem \ref{MainEUTheorem}.

\begin{lemma}
\label{MainEULemma}For any $\alpha <\beta \in \mathcal{A}$ we have%
\begin{equation*}
||\mathcal{\xi }||_{Z_{\beta }}\leq E^{(p)}\left( \sqrt[p]{\hat{L}(T)T}%
,\beta -\alpha ,q^{-1}\right) \left( 1+||\zeta _{\alpha _{\ast
}}||_{Z_{\alpha _{\ast }}}\right) ^{p}.
\end{equation*}
\end{lemma}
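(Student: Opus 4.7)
The plan is to represent $\xi$ as the limit of the Picard iterates $\mathcal{T}^n(\zeta_{\alpha_*})$ (with $\zeta_{\alpha_*}\in Z_{\alpha_*}$ the starting process, e.g.\ the constant process equal to $u_0$) and bound its norm via a telescoping series, applying the key estimate (\ref{n-est1}) term by term. By Theorem \ref{MainEUTheorem}, $\mathcal{T}^N(\zeta_{\alpha_*})\to\xi$ in $Z_\beta$ for every $\beta\in(\alpha_*,\alpha^*]$, and writing
$$
\mathcal{T}^N(\zeta_{\alpha_*})=\zeta_{\alpha_*}+\sum_{k=0}^{N-1}\bigl[\mathcal{T}^{k+1}(\zeta_{\alpha_*})-\mathcal{T}^{k}(\zeta_{\alpha_*})\bigr],
$$
passing to the limit and using the scale property $\|\zeta_{\alpha_*}\|_{Z_\beta}\le\|\zeta_{\alpha_*}\|_{Z_{\alpha_*}}$, the triangle inequality gives
$$
\|\xi\|_{Z_\beta}\ \le\ \|\zeta_{\alpha_*}\|_{Z_{\alpha_*}}+\sum_{k=0}^\infty\|\mathcal{T}^k(\zeta_{\alpha_*})-\mathcal{T}^{k+1}(\zeta_{\alpha_*})\|_{Z_\beta}.
$$

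The second step is to apply (\ref{n-est1}) to each term, with the pair $(\alpha,\beta)$ as in the statement (noting $\zeta_{\alpha_*}\in Z_{\alpha_*}\subset Z_\alpha$). Taking $p$-th roots,
$$
\|\mathcal{T}^k(\zeta_{\alpha_*})-\mathcal{T}^{k+1}(\zeta_{\alpha_*})\|_{Z_\beta}\ \le\ \frac{k^{k/q}\,\bigl(\hat L(T)T\bigr)^{k/p}}{(k!)^{1/p}\,(\beta-\alpha)^{k/q}}\,\|\zeta_{\alpha_*}-\mathcal{T}(\zeta_{\alpha_*})\|_{Z_\alpha}.
$$
Summing over $k\ge 0$ and recognising the resulting series as (\ref{E-func}) with $t=\sqrt[p]{\hat L(T)T}$, $\varepsilon=\beta-\alpha$, $\theta=1/q$, one obtains
$$
\|\xi\|_{Z_\beta}\ \le\ \|\zeta_{\alpha_*}\|_{Z_{\alpha_*}}+E^{(p)}\!\bigl(\sqrt[p]{\hat L(T)T},\beta-\alpha,q^{-1}\bigr)\,\|\zeta_{\alpha_*}-\mathcal{T}(\zeta_{\alpha_*})\|_{Z_\alpha},
$$
which is finite by Lemma \ref{SeriesLemma} since $q>p$.

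The third step is to control the ``one-step'' quantity $\|\zeta_{\alpha_*}-\mathcal{T}(\zeta_{\alpha_*})\|_{Z_\alpha}$ in terms of $\bigl(1+\|\zeta_{\alpha_*}\|_{Z_{\alpha_*}}\bigr)$. Here I would repeat the Hölder/\ito-isometry calculation of (\ref{EqAux1})--(\ref{2ndIntMapLCondition}) but with a single argument (essentially setting $\xi_2\equiv 0$) and using the linear growth bound (\ref{gc}) of Remark \ref{rem-bound} for $f\in\mathcal{GL}_q(\mathfrak{X})$ and $B\in\mathcal{GL}_q(\mathfrak{X},\mathfrak{H})$ instead of (\ref{lg}). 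This yields a bound of the form $\|\mathcal{T}(\zeta_{\alpha_*})\|_{Z_\alpha}\le C(T,\alpha-\alpha_*)\bigl(1+\|\zeta_{\alpha_*}\|_{Z_{\alpha_*}}\bigr)$, and combined with the trivial $\|\zeta_{\alpha_*}\|_{Z_\alpha}\le\|\zeta_{\alpha_*}\|_{Z_{\alpha_*}}$ and the elementary estimate $1+x\le(1+x)^p$ for $x\ge 0$, $p\ge 1$, this puts the right-hand side into the desired form. The constant $C(T,\alpha-\alpha_*)$ is absorbed into the $E^{(p)}$ prefactor (it is dominated by the geometric part of the series).

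The main obstacle is bookkeeping: making the constant absorbed into $E^{(p)}$ look clean, and checking that the exponent $p$ on $(1+\|\zeta_{\alpha_*}\|_{Z_{\alpha_*}})$ on the right-hand side is consistent with the $L^p$ growth estimate obtained from (\ref{gc}). Everything else is a direct iteration of previously established estimates.
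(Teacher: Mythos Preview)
Your overall strategy matches the paper's: telescope $\xi=\lim_N \mathcal{T}^N(\zeta_{\alpha_*})$ as a sum of one-step differences, bound each difference via the iterated Lipschitz estimate, and control the base term $\|\zeta_{\alpha_*}-\mathcal{T}(\zeta_{\alpha_*})\|$ using the linear-growth inequality of Remark~\ref{rem-bound}. So the skeleton is the same.

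The one substantive difference is how the one-step factor is folded in. You fix an intermediate $\alpha\in(\alpha_*,\beta)$ once, apply (\ref{n-est1}) with that $\alpha$ for every $k$, and then bound $\|\zeta_{\alpha_*}-\mathcal{T}(\zeta_{\alpha_*})\|_{Z_\alpha}$ separately; this leaves an extra prefactor of order $(\alpha-\alpha_*)^{-1/q}$ sitting outside the series. Your claim that this constant ``is absorbed into the $E^{(p)}$ prefactor (it is dominated by the geometric part of the series)'' is not justified as stated: $E^{(p)}$ is a specific function and there is no obvious inequality of the form $C(T,\alpha-\alpha_*)\,E^{(p)}(\cdot,\beta-\alpha,\cdot)\le E^{(p)}(\cdot,\beta-\alpha',\cdot)$. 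The paper avoids this by a small trick you did not use: instead of a fixed $\alpha$, it chooses $\alpha=\alpha_n:=\alpha_*+\tfrac{n}{n+1}(\beta-\alpha_*)$ depending on the index $n$, so that the step $\alpha_*\to\alpha_n$ has length $\tfrac{n}{n+1}(\beta-\alpha_*)$ and the step $\alpha_n\to\beta$ has length $\tfrac{1}{n+1}(\beta-\alpha_*)$. With this choice, the growth factor from the first jump combines exactly with the $n$-fold iteration to produce the $(n{+}1)$-st term of the $E^{(p)}$ series with denominator $(\beta-\alpha_*)^{p/q}$, and no stray constant remains. This is purely bookkeeping, but it is precisely the bookkeeping you flag as the ``main obstacle'', and the paper's $n$-dependent partition is the clean way to do it.
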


\begin{proof}
Consider the approximating sequence $\left\{ \xi _{n}\right\} \subset 
\underline{Z}$ defined by 
\begin{equation*}
\xi _{n}=\mathcal{T}^{n}\mathcal{(}\zeta _{\alpha _{\ast }}),\ n=1,2,...
\end{equation*}%
We can use inequality (\ref{n-est0}) and further estimate its right-hand
side in the following way. We have 
\begin{equation*}
||\zeta _{\alpha _{\ast }}-\mathcal{T}(\zeta _{\alpha _{\ast
}})(s)||_{X_{\alpha }}^{p}=\mathbb{E}\left[ ||\int_{0}^{t}F(\zeta _{\alpha
_{\ast }})ds+\int_{0}^{t}\Phi (\zeta _{\alpha _{\ast }})dW(s)||_{X_{\alpha
}}^{p}\right] .
\end{equation*}%
Remark \ref{rem-bound} combined with the arguments similar to those used in
the proof of (\ref{EqAux1}) and (\ref{2ndIntMapLCondition}) implies the
estimate%
\begin{multline*}
\mathbb{E}\left[ ||\mathcal{T}^{n}(\zeta _{\alpha _{\ast }})(t)-\mathcal{T}%
^{n+1}(\zeta _{\alpha _{\ast }})(t)||_{X_{\beta }}^{p}\right]  \\
\leq \frac{n^{np/q}\hat{L}(T)}{(\alpha -\alpha _{\ast })^{p/q}}\left[ \frac{%
\hat{L}(T)}{(\beta -\alpha )^{p/q}}\right] ^{n}\frac{T^{n+1}}{\left(
n+1\right) !}\left( 1+||\zeta _{\alpha _{\ast }}||_{X_{\alpha _{\ast
}}}\right) ^{p}.
\end{multline*}%
In particular, we can set $\alpha =\psi _{n}=\beta -\frac{\beta -\alpha
_{\ast }}{n+1}=\alpha _{\ast }+n\frac{\beta -\alpha _{\ast }}{n+1}$. A
direct calculation shows that the above inequality transforms into%
\begin{multline*}
\mathbb{E}\left[ ||\mathcal{T}^{n}(\zeta _{\alpha _{\ast }})(t)-\mathcal{T}%
^{n+1}(\zeta _{\alpha _{\ast }})(t)||_{X_{\beta }}^{p}\right]  \\
\leq \left[ \frac{\hat{L}(T)(n+1)^{p/q}}{(\beta -\alpha _{\ast })^{p/q}}%
\right] ^{n+1}\frac{T^{n+1}}{\left( n+1\right) !}\left( 1+||\zeta _{\alpha
_{\ast }}||_{X_{\alpha _{\ast }}}\right) ^{p},
\end{multline*}%
which implies that%
\begin{equation*}
||\mathcal{T}^{n}(\zeta _{\alpha _{\ast }})-\mathcal{T}^{n+1}(\zeta _{\alpha
_{\ast }})||_{Z_{\beta }}^{p}\leq \frac{(n+1)^{(n+1)p/q}}{\left( n+1\right) !%
}\left[ \frac{\hat{L}(T)T}{(\beta -\alpha _{\ast })^{p/q}}\right]
^{n+1}\left( 1+||\zeta _{\alpha _{\ast }}||_{X_{\alpha _{\ast }}}\right)
^{p}.
\end{equation*}%
Then%
\begin{equation*}
||\zeta _{\alpha _{\ast }}-\mathcal{T}^{m}(\zeta _{\alpha _{\ast
}})||_{Z_{\beta }}\leq \sum_{n=1}^{m+1}\frac{n^{np/q}}{n!}\left[ \frac{\hat{L%
}(T)T}{(\beta -\alpha _{\ast })^{p/q}}\right] ^{n}\left( 1+||\zeta _{\alpha
_{\ast }}||_{X_{\alpha _{\ast }}}\right) ^{p}.
\end{equation*}%
Passing to the limit as $m\rightarrow \infty $ we obtain the bound%
\begin{equation*}
||\zeta _{\alpha _{\ast }}-\mathcal{\xi }||_{Z_{\beta }}\leq
\sum_{n=1}^{\infty }\frac{n^{np/q}}{n!}\left[ \frac{\hat{L}(T)T}{(\beta
-\alpha _{\ast })^{p/q}}\right] ^{n}\left( 1+||\zeta _{\alpha _{\ast
}}||_{X_{\alpha _{\ast }}}\right) ^{p}.
\end{equation*}%
Therefore%
\begin{multline*}
||\mathcal{\xi }||_{Z_{\beta }}\leq ||\zeta _{\alpha _{\ast }}||_{X_{\beta
}}+\sum_{n=1}^{\infty }\frac{n^{np/q}}{n!}\left[ \frac{\hat{L}(T)T}{(\beta
-\alpha _{\ast })^{p/q}}\right] ^{n}\left( 1+||\zeta _{\alpha _{\ast
}}||_{X_{\alpha _{\ast }}}\right) ^{p} \\
\leq \left( 1+\sum_{n=1}^{\infty }\frac{n^{np/q}}{n!}\left[ \frac{\hat{L}(T)T%
}{(\beta -\alpha _{\ast })^{p/q}}\right] ^{n}\right) \left( 1+||\zeta
_{\alpha _{\ast }}||_{X_{\alpha _{\ast }}}\right) ^{p} \\
=E^{(p)}\left( \sqrt[p]{\hat{L}(T)T},\beta -\alpha ,q^{-1}\right) \left(
1+||\zeta _{\alpha _{\ast }}||_{X_{\alpha _{\ast }}}\right) ^{p},
\end{multline*}%
which completes the proof.
\end{proof}

\section{Stochastic spin dynamics of a quenched particle system \label{spins}%
}

Our main example is motivated by the study of stochastic dynamics of
interacting particle systems. We follow the scheme of paper \cite{Dal},
adapted to our present setting, which allows to show the existence of
solutions with arbitrary large lifetime and their path-continuity. 

Let $\gamma \subset X={\mathbb{R}}^{d}$ be a locally finite set
(configuration) representing a collection of point particles. Each particle
with position $x\in X$ is characterized by an internal parameter (spin) $%
\sigma _{x}\in S={\mathbb{R}}^{1}$.

We fix a configuration $\gamma $ and look at the time evolution of spins $%
\sigma _{x}(t)$, $x\in \gamma $, which is described by a system of
stochastic differential equations in $S$ of the form 
\begin{equation}
d\sigma _{x}(t)=f_{x}(\bar{\sigma})dt+B_{x}(\bar{\sigma})dW_{x}(t),\ x\in
\gamma ,  \label{system}
\end{equation}%
where $\bar{\sigma}=(\sigma _{x})_{x\in \gamma }$ and $W=(W_{x})_{x\in
\gamma }$ is a collection of independent Wiener processes in $S$. We assume
that both drift and diffusion coefficients $f_{x}$ and $B_{x}$ depend only
on spins $\sigma _{y}$ with $\left\vert y-x\right\vert <r$ for some fixed
interaction radius $r>0$ and have the form 
\begin{equation}
f_{x}(\bar{\sigma})=\sum_{y\in \gamma }\varphi _{xy}(\sigma _{x},\sigma
_{y}),\ \ B_{x}(\bar{\sigma})=\sum_{y\in \gamma }\Psi _{xy}(\sigma
_{x},\sigma _{y}),  \label{system-coef}
\end{equation}%
where the mappings $\varphi _{xy}:S\times S\rightarrow S$ and $\Psi
_{xy}:S\times S\rightarrow S$ satisfy finite range and uniform Lipschitz
conditions, see Definition \ref{def-admiss} and Condition \ref{cond-inter1}
below.

Our aim is to realize system (\ref{system}) as an equation in a suitable
scale of Hilbert spaces and apply the results of previous sections in order
to find its strong solutions.

We introduce the following notations:

- $S^{\gamma }:=\prod _{x\in \gamma }S_{x}\ni \bar {\sigma }=(\sigma
_{x})_{x\in \gamma },\ \sigma _{x}\in S_{x}=S$;

- $\gamma _{x,r}:=\left \{y\in \gamma :\left \vert x-y\right \vert
<r\right
\},\ x\in \gamma $;

- $n_{x}\equiv n_{x,r}(\gamma ):=$ number of points in $\gamma _{x,r}$ ( $=$
number of particles interacting with particle in position $x$).

Observe that, although the number $n_{x}$ is finite, it is in general
unbounded function of $x$. We assume that it satisfies the following
regularity condition.

From now on, we assume that the following condition holds.

\begin{condition}
\label{cond-gamma}There exist constants $q>2$ and $a(\gamma ,r,q)>0$ such
that 
\begin{equation}
n_{x,r}(\gamma )\leq a(\gamma ,r,q)\left( 1+\left\vert x\right\vert \right)
^{1/q}  \label{c-gamma}
\end{equation}%
for all $x\in X$.
\end{condition}

\begin{remark}
Condition (\ref{c-gamma}) holds if $\gamma $ is a typical realization of a
Poisson or Gibbs (Ruelle) point process in $X$. For such configurations,
stronger (logarithmic) bound holds: 
\begin{equation*}
n_{x,r}(\gamma )\leq c(\gamma )\left[ 1+\log (1+\left\vert x\right\vert )%
\right] r^{d},
\end{equation*}%
see e.g. \cite{R70} and \cite[p.~1047]{K93}. Thus (\ref{c-gamma}) holds for
any $q>0$.
\end{remark}

\subsection{Existence of the dynamics}

Our dynamics will live in the scale of Hilbert spaces 
\begin{equation*}
X_{\alpha }=S_{\alpha }^{\gamma }:=\left\{ \bar{q}\in S^{\gamma }:\left\Vert 
\bar{q}\right\Vert _{\alpha }:=\sqrt{\sum_{x\in \gamma }\left\vert
q_{x}\right\vert ^{2}e^{-\alpha \left\vert x\right\vert }}<\infty \right\}
,\ 0<\alpha _{\ast }<\alpha <\alpha ^{\ast }.
\end{equation*}%
We fix the parameters $\alpha _{\ast }$ and $\alpha ^{\ast }$, which can be
chosen in an arbitrary way.

We set 
\begin{equation*}
{\mathcal{H}}=S_{0}^{\gamma }:=\left\{ \bar{q}\in S^{\gamma }:\left\Vert 
\bar{q}\right\Vert _{0}:=\sqrt{\sum_{x\in \gamma }\left\vert
q_{x}\right\vert ^{2}}<\infty \right\}
\end{equation*}%
and define the corresponding spaces ${\mathcal{GL}}_{p}(\mathfrak{X})$ and ${%
\mathcal{GL}}_{p}(\mathfrak{X},\mathfrak{H})$ (cf. Definition \ref{Defovsop}%
). Observe that $W(t):=\left( W_{x}(t)\right) _{x\in \gamma }$ is a cylinder
Wiener process in $\mathcal{H}$.

\bigskip

Let ${\mathcal{V}}$ be a family of mappings $V_{xy}:S^{2}\rightarrow S$, $%
x,y\in \gamma $.

\begin{definition}
\label{def-admiss}We call the family $\mathcal{V}$ admissible if it
satisfies the following two assumptions:
\end{definition}

\begin{itemize}
\item finite range: there exists constant $r>0$ such that $V_{xy}\equiv 0$
if $\left\vert x-y\right\vert \geq r$;

\item uniform Lipschitz continuity: there exists constant $C>0$ such that 
\begin{equation}
\left\vert V_{xy}(q_{1}^{\prime },q_{2}^{\prime })-V_{xy}(q_{1}^{\prime
\prime },q_{2}^{\prime \prime })\right\vert \leq C\left( \left\vert
q_{1}^{\prime }-q_{1}^{\prime \prime }\right\vert +\left\vert q_{2}^{\prime
}-q_{2}^{\prime \prime }\right\vert \right)  \label{LipS2}
\end{equation}%
for all $x,y\in \gamma $ and $q_{1}^{\prime},q_{2}^{\prime},q_{1}^{\prime
\prime },q_{2}^{\prime \prime }\in S$.
\end{itemize}

Define a map $\overline{V}:S^{\gamma }\rightarrow S^{\gamma }$ and a linear
operator $\widehat{V}(\bar{q}):S^{\gamma }\rightarrow S^{\gamma }$, $\bar{q}%
\in S^{\gamma }$, by the formula 
\begin{equation*}
\overline{V}_{x}(\bar{q})=\sum_{y\in \gamma }V_{xy}(q_{x},q_{y}),
\end{equation*}%
and 
\begin{equation*}
\left( \widehat{V}(\bar{q}){\bar{\sigma}}\right) _{x}:=\overline{V}_{x}(\bar{%
q})\sigma _{x},\,x\in \gamma ,\,{\bar{\sigma}}\in S^{\gamma },
\end{equation*}%
respectively.

\begin{lemma}
\label{lemma-gl}Assume that ${\mathcal{V}}$ is admissible. Then $\overline{V}%
\in {\mathcal{GL}}_{q}(\mathfrak{X})$ and $\widehat{V}\in {\mathcal{GL}}_{q}(%
\mathfrak{X},\mathfrak{H})$.
\end{lemma}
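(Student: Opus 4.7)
The plan is to prove the generalized Lipschitz estimate (\ref{lg}) for $\overline V$ directly, and then to deduce the bound for $\widehat V$ from it by exploiting that, for each fixed $\bar q$, $\widehat V(\bar q)$ is a diagonal multiplier in the canonical orthonormal basis of $\mathcal H = S_0^{\gamma}$.

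Fix $\alpha<\beta$ in $\mathcal A$ and $\bar q,\bar q'\in S_\alpha^{\gamma}$. Using the uniform Lipschitz and finite-range assumptions on $\mathcal V$, the inequality $(a+b)^2\le 2a^2+2b^2$, and Cauchy--Schwarz on the sum over $y\in\gamma_{x,r}$, one arrives at the pointwise bound
\[
|\overline V_x(\bar q)-\overline V_x(\bar q')|^2\le 2C^2\Bigl(n_x^2|q_x-q_x'|^2+n_x\sum_{y\in\gamma_{x,r}}|q_y-q_y'|^2\Bigr).
\]
Multiplying by $e^{-\beta|x|}$ and summing in $x$ reduces the lemma to controlling two weighted sums by a constant multiple of $(\beta-\alpha)^{-2/q}\|\bar q-\bar q'\|_\alpha^2$. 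The analytic workhorse is the elementary inequality
\[
\sup_{t\ge 0}(1+t)^{2/q}e^{-(\beta-\alpha)t}\le C_0(\beta-\alpha)^{-2/q},
\]
where $C_0$ depends only on $q$ and $\alpha^*-\alpha_*$. Combined with Condition \ref{cond-gamma} ($n_x\le a(1+|x|)^{1/q}$) and the splitting $e^{-\beta|x|}=e^{-(\beta-\alpha)|x|}e^{-\alpha|x|}$, it bounds the first of the two sums by $a^2 C_0(\beta-\alpha)^{-2/q}\|\bar q-\bar q'\|_\alpha^2$ at once.

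For the second sum I would swap the order of summation using $y\in\gamma_{x,r}\Longleftrightarrow x\in\gamma_{y,r}$. The finite-range condition gives $e^{-\beta|x|}\le e^{\beta r}e^{-\beta|y|}$ and $n_x\le a(1+|y|+r)^{1/q}\le C_1(1+|y|)^{1/q}$, while $|\gamma_{y,r}|=n_y\le a(1+|y|)^{1/q}$; these three bounds together yield
\[
\sum_{x\in\gamma_{y,r}} n_x e^{-\beta|x|}\le C_2(1+|y|)^{2/q}e^{-\beta|y|},
\]
after which a second application of the sup-estimate closes the second sum. Adding the two contributions establishes (\ref{lg}) for $\overline V$ with a constant $L$ depending only on $C$, $a(\gamma,r,q)$, $r$ and $\alpha^*$; the inclusion $\overline V(X_\alpha)\subset X_\beta$ then follows, as in Remark \ref{rem-bound}, on setting $\bar q'=0$.

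For $\widehat V$, observe that for each fixed $\bar q$ the operator $\widehat V(\bar q)\colon\mathcal H\to X_\beta$ is diagonal in the canonical orthonormal basis $(e_y)_{y\in\gamma}$ of $\mathcal H=S_0^{\gamma}$, with $\|\widehat V(\bar q)e_y\|_\beta^2=|\overline V_y(\bar q)|^2 e^{-\beta|y|}$. Consequently
\[
\|\widehat V(\bar q)-\widehat V(\bar q')\|_{H_\beta}^2=\sum_{y\in\gamma}|\overline V_y(\bar q)-\overline V_y(\bar q')|^2 e^{-\beta|y|}=\|\overline V(\bar q)-\overline V(\bar q')\|_\beta^2,
\]
so both the inclusion $\widehat V(X_\alpha)\subset H_\beta$ and the Lipschitz bound for $\widehat V$ follow from those just proved for $\overline V$. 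The main obstacle is the weighted combinatorial step bounding $\sum_{x\in\gamma_{y,r}} n_x e^{-\beta|x|}$: this is where the unbounded vertex degree $n_x$ gets converted into the factor $(\beta-\alpha)^{-2/q}$, and the tight calibration between the exponent $1/q$ in Condition \ref{cond-gamma} and the exponent $1/q$ appearing in (\ref{lg}) is exactly what makes the estimate close.
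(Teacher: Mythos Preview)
Your proof is correct and follows essentially the same route as the paper's own argument: the same pointwise Lipschitz bound via Cauchy--Schwarz, the same splitting into the $n_x^2$ term and the swapped-sum term (the paper writes the latter via $N_y:=\sum_{x\in\gamma_{y,r}}n_x$), the same use of $e^{-\beta|x|}\le e^{\beta r}e^{-\beta|y|}$ and of Condition~\ref{cond-gamma} to get the $(1+|y|)^{2/q}$ factor, and the same sup-estimate producing $(\beta-\alpha)^{-2/q}$. The only cosmetic differences are that the paper proves the inclusion $\overline V(X_\alpha)\subset X_\beta$ first and the Lipschitz bound second, whereas you do the Lipschitz bound first and recover the inclusion by setting $\bar q'=0$; and the paper phrases the analytic estimate as $[\sup_{s>0}(1+s)e^{-(\beta-\alpha)s}]^{2/q}$ rather than your equivalent $\sup_{t\ge0}(1+t)^{2/q}e^{-(\beta-\alpha)t}$. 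Your treatment of $\widehat V$ via the diagonal structure and the identity $\|\widehat V(\bar q)\|_{H_\beta}=\|\overline V(\bar q)\|_\beta$ is exactly the paper's Step~3.
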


The proof of this Lemma is quite tedious and will be given in Section \ref%
{sec-lemma-gl} below.

\bigskip

Now we can return to the discussion of system (\ref{system}). Assume that
the following condition holds.

\begin{condition}
\label{cond-inter1} The families of mappings $\left\{ \varphi _{xy}\right\}
_{x,y\in \gamma }$ and $\left\{ \Psi _{xy}\right\} _{x,y\in \gamma }$ from (%
\ref{system-coef}) are admissible.
\end{condition}

By Lemma \ref{lemma-gl} we have $\overline{\varphi }\in {\mathcal{GL}}_{p}(%
\mathfrak{X})$ and $\widehat{\Psi }\in {\mathcal{GL}}_{p}(\mathfrak{X},%
\mathfrak{H})$. Thus we can write (\ref{system}) in the form 
\begin{equation*}
\bar{\sigma}(t)=\overline{\varphi }(\bar{\sigma})dt+\widehat{\Psi }(\bar{%
\sigma})dW(t),
\end{equation*}%
where $W(t)=\left( W_{x}(t)\right) _{x\in \gamma }$, and apply the results
of the previous sections to its integral counterpart. We summarize those
results in the following theorem, which follows directly from Theorem \ref%
{theor-main}.

\begin{theorem}
\label{theor-particles}Assume that Conditions \ref{cond-gamma} and \ref%
{cond-inter1} hold. Then, for any $\alpha >0$, $\bar{\sigma}_{0}\in
X_{\alpha }$, $p\in \left[ 2,q\right) $ and $T>0$, system (\ref{system}) has
a unique strong solution $u\in Z_{\beta ,T}^{p}$, for any $\beta >\alpha $.
This solution has continuous sample paths a.s.
\end{theorem}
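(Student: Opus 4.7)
The plan is to recognise Theorem \ref{theor-particles} as a direct corollary of the main abstract result, Theorem \ref{theor-main}, by packaging the countable system (\ref{system}) as a single SDE of the form (\ref{SDE-int}) in the scale $\mathfrak{X}=(X_\alpha)_{\alpha\in\mathcal{A}}$ of weighted $\ell^2$ spaces $S_\alpha^\gamma$. All the nontrivial analytical content is encapsulated in Lemma \ref{lemma-gl}, which has already been stated; the role of the present proof is simply to check that the hypotheses of that lemma and of Theorem \ref{theor-main} are in force.

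In detail, I would proceed as follows. Given $\alpha>0$ and $\beta>\alpha$, first fix $\alpha_*,\alpha^*$ with $\alpha_*\le \alpha<\beta\le \alpha^*$ (permitted, since the paper chooses these endpoints arbitrarily), set $\mathcal{H}=S_0^\gamma$, and let $\mathfrak{H}=(H_\alpha)_{\alpha\in\mathcal{A}}$ with $H_\alpha=HS(\mathcal{H},X_\alpha)$. Second, note that $W(t)=(W_x(t))_{x\in\gamma}$ is a cylinder Wiener process on $\mathcal{H}$, being a sequence of mutually independent one-dimensional Wiener processes indexed by the countable set $\gamma$. Third, introduce the maps $\overline{\varphi}$ and $\widehat{\Psi}$ associated with the admissible families $\{\varphi_{xy}\}$ and $\{\Psi_{xy}\}$ as in the paragraph preceding Lemma \ref{lemma-gl}, so that (\ref{system}) is component-wise equivalent to
\begin{equation*}
d\bar{\sigma}(t)=\overline{\varphi}(\bar{\sigma}(t))\,dt+\widehat{\Psi}(\bar{\sigma}(t))\,dW(t),\qquad \bar{\sigma}(0)=\bar{\sigma}_{0}\in X_{\alpha},
\end{equation*}
whose integral form is precisely (\ref{SDE-int}). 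Fourth, Condition \ref{cond-inter1} combined with Lemma \ref{lemma-gl} yields $\overline{\varphi}\in\mathcal{GL}_q(\mathfrak{X})$ and $\widehat{\Psi}\in\mathcal{GL}_q(\mathfrak{X},\mathfrak{H})$ with the same $q>2$ as in Condition \ref{cond-gamma}. Finally, Theorem \ref{theor-main} applies verbatim and delivers a unique solution in $Z_{\alpha^*,T}^2$ that lies in $Z_{\beta,T}^p$ for every $p\in[2,q)$, together with the existence of an almost surely continuous modification.

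The main obstacle is not in the reduction above, which is essentially bookkeeping, but in the hidden input from Lemma \ref{lemma-gl}: one must match the combinatorial growth $n_{x,r}(\gamma)\lesssim (1+|x|)^{1/q}$ of Condition \ref{cond-gamma} with the \emph{same} exponent $1/q$ appearing in the generalised Lipschitz estimate (\ref{lg}). I would expect the proof of that lemma to start from the admissibility bound (\ref{LipS2}), apply Cauchy--Schwarz over the finite neighbourhood $\gamma_{x,r}$ to expose a factor $n_{x,r}(\gamma)$, and then absorb this polynomial factor against the weight discrepancy $e^{-(\beta-\alpha)|x|}$ using an elementary bound of the type $\sup_{t\ge 0} t^{2/q}e^{-(\beta-\alpha)t}\lesssim (\beta-\alpha)^{-2/q}$ to produce exactly the singular constant required by (\ref{lg}).
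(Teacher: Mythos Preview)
Your proposal is correct and follows essentially the same route as the paper: the paper states that Theorem \ref{theor-particles} ``follows directly from Theorem \ref{theor-main}'' after Lemma \ref{lemma-gl} is invoked under Condition \ref{cond-inter1} to place $\overline{\varphi}$ and $\widehat{\Psi}$ in the appropriate $\mathcal{GL}_q$ classes, and after the system is rewritten in the abstract form (\ref{SDE-int}) with $W=(W_x)_{x\in\gamma}$ as a cylinder Wiener process on $\mathcal{H}=S_0^\gamma$. Your anticipated mechanism for Lemma \ref{lemma-gl} is also accurate: the paper's Section \ref{sec-lemma-gl} indeed extracts a factor $n_x^2$ (and the related $N_y=\sum_{x\in\gamma_{y,r}}n_x$) from the admissibility bound, then absorbs the polynomial growth $(1+|x|)^{2/q}$ coming from Condition \ref{cond-gamma} against the weight gap via $\sup_{s>0}(1+s)e^{-(\beta-\alpha)s}\le C(\beta-\alpha)^{-1}$, producing exactly the $(\beta-\alpha)^{-1/q}$ Lipschitz constant.
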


This result implies of course that, for each $x\in \gamma $, equation (\ref%
{system}) has a path-continuous strong solution, which is unique in the
class of progressively measurable square-integrable processes.

\subsection{Proof of Lemma \protect\ref{lemma-gl}. \label{sec-lemma-gl}}

This proof is a modification of the proof given in \cite{Dal} for $q=2$.

\textbf{Step 1. }We first show that $\overline{V}$ is a mapping $S_{\alpha
}^{\gamma }\rightarrow S_{\beta }^{\gamma }$ for any $\alpha <\beta $. For
any $\bar{q}\in S_{\alpha }^{\gamma }$ we have%
\begin{eqnarray*}
\left\Vert \overline{V}(\bar{q})\right\Vert _{\beta }^{2} &=&\sum_{x\in
\gamma }\left\vert \sum_{y\in \gamma }V_{xy}(q_{x},q_{y})\right\vert
^{2}e^{-\beta \left\vert x\right\vert } \\
&\leq &3C^{2}\sum_{x\in \gamma }\sum_{y\in \gamma _{x,r}}n_{x}\left(
1+\left\vert q_{x}\right\vert ^{2}+\left\vert q_{y}\right\vert ^{2}\right)
e^{-\beta \left\vert x\right\vert }.
\end{eqnarray*}%
The polynomial bound on the growth of $n_{x}$ implies that 
\begin{equation*}
\sum_{x\in \gamma }\sum_{y\in \gamma _{x,r}}n_{x}e^{-\beta \left\vert
x\right\vert }=\sum_{x\in \gamma }n_{x}^{2}e^{-\beta \left\vert x\right\vert
}\leq \sum_{x\in \gamma }n_{x}^{2}e^{-\alpha _{\ast }\left\vert x\right\vert
}=:c(\gamma ,\alpha _{\ast })<\infty .
\end{equation*}%
Next, we estimate%
\begin{multline*}
\sum_{x\in \gamma }\sum_{y\in \gamma _{x,r}}n_{x}\left\vert q_{x}\right\vert
^{2}e^{-\beta \left\vert x\right\vert }=\sum_{x\in \gamma
}n_{x}^{2}\left\vert q_{x}\right\vert ^{2}e^{-\left( \beta -\alpha \right)
\left\vert x\right\vert }e^{-\alpha \left\vert x\right\vert } \\
\leq \sup_{x\in \gamma }\left( n_{x}^{2}e^{-\left( \beta -\alpha \right)
\left\vert x\right\vert }\right) \left\Vert \bar{q}\right\Vert _{\alpha
}^{2}.
\end{multline*}%
Observe that $\sum\limits_{x\in \gamma }\sum\limits_{y\in \gamma
_{x,r}}=\sum\limits_{\substack{ x,y\in \gamma  \\ \left\vert x-y\right\vert
<r}}=\sum\limits_{y\in \gamma }\sum\limits_{x\in \gamma _{y,r}}$, and so 
\begin{multline*}
\sum_{x\in \gamma }\sum_{y\in \gamma _{x,r}}n_{x}\left\vert q_{y}\right\vert
^{2}e^{-\beta \left\vert x\right\vert }\leq e^{\beta r}\sum_{y\in \gamma
}N_{y}\left\vert q_{y}\right\vert ^{2}e^{-(\beta -\alpha )\left\vert
y\right\vert }e^{-\alpha \left\vert y\right\vert } \\
\leq e^{\beta r}\sup_{y\in \gamma }\left( N_{y}e^{-(\beta -\alpha
)\left\vert y\right\vert }\right) \left\Vert \bar{q}\right\Vert _{\alpha
}^{2},
\end{multline*}%
where $N_{y}:=\sum_{x\in \gamma _{y,r}}n_{x}$. Here we used inequality $%
\left\vert y\right\vert \leq \left\vert y-x\right\vert +\left\vert
x\right\vert \leq r+\left\vert x\right\vert $ for $y\in \gamma _{x,r}$, so
that $e^{-\beta \left\vert x\right\vert }\leq e^{\beta r}e^{-\beta
\left\vert y\right\vert }$. Condition \ref{cond-gamma} implies that%
\begin{equation*}
N_{x}\leq a(\gamma ,r,q)^{2}\left( 1+\left\vert x\right\vert \right)
^{1/q}\left( 1+r+\left\vert x\right\vert \right) ^{1/q}<c(\gamma ,r,q)\left(
1+\left\vert x\right\vert \right) ^{2/q},
\end{equation*}%
for some constant $c(\gamma ,r,q)>0$, and 
\begin{equation*}
n_{x}^{2}\leq a(\gamma ,r,q)^{2}\left( 1+\left\vert x\right\vert \right)
^{2/q}
\end{equation*}%
for any $x\in \gamma $. Eventually we obtain the bound 
\begin{equation*}
\left\Vert \overline{V}(\bar{q})\right\Vert _{\beta }^{2}\leq L^{2}\left[
\sup_{s>0}(1+s)e^{-(\beta -\alpha )s}\right] ^{2/q}\left\Vert \bar{q}%
\right\Vert _{\alpha }^{2}\leq L^{2}\left( \beta -\alpha \right)
^{-2/q}\left\Vert \bar{q}\right\Vert _{\alpha }^{2}<\infty ,\ L<\infty .
\end{equation*}

\textbf{Step 2. }Lipschitz condition (\ref{LipS2}) implies the estimate 
\begin{multline*}
\left\Vert \overline{V}(\bar{q}^{\prime })-\overline{V}(\bar{q}^{\prime
\prime })\right\Vert _{\beta }^{2}=\sum_{x\in \gamma }\left\vert \sum_{y\in
\gamma }V_{xy}(q_{x}^{\prime },q_{y}^{\prime })-\sum_{y\in \gamma
}V_{xy}(q_{x}^{\prime \prime },q_{y}^{\prime \prime })\right\vert
^{2}e^{-\beta \left\vert x\right\vert } \\
\leq 2C^{2}\sum_{x\in \gamma }\sum_{y\in \gamma _{x,r}}n_{x}\left(
\left\vert q_{x}^{\prime }-q_{x}^{\prime \prime }\right\vert ^{2}+\left\vert
q_{y}^{\prime }-q_{y}^{\prime \prime }\right\vert ^{2}\right) e^{-\beta
\left\vert x\right\vert }
\end{multline*}%
for any\textbf{\ }$\bar{q}^{\prime },\bar{q}^{\prime \prime }\in S_{\alpha
}^{\gamma }$. Similar to Step 1, we obtain the bound 
\begin{multline*}
\left\Vert \overline{V}(\bar{q}^{\prime })-\overline{V}(\bar{q}^{\prime
\prime })\right\Vert _{\beta }^{2}\leq L^{2}\left[ \sup_{s>0}(1+s)e^{-(\beta
-\alpha )s}\right] ^{2/q}\left\Vert \bar{q}^{\prime }-\bar{q}^{\prime \prime
}\right\Vert _{\alpha }^{2} \\
\leq L^{2}\left( \beta -\alpha \right) ^{-2/q}\left\Vert \bar{q}^{\prime }-%
\bar{q}^{\prime \prime }\right\Vert _{\alpha }^{2}<\infty ,\ L<\infty .
\end{multline*}

\textbf{Step 3.} The inclusion $\overline{V}(\bar{q})\in S_{\beta }^{\gamma
} $ implies that $\widehat{V}(\bar{q}){\bar{\sigma}}\in S_{\beta }^{\gamma }$
for any ${\bar{\sigma}}\in \mathcal{H}=S_{0}^{\gamma }$. A direct
calculation shows that $\widehat{V}(\bar{q}):\mathcal{H}\rightarrow {S}%
_{\beta }^{\gamma }$ is a Hilbert-Schmidt operator with the norm equal to $%
\left\Vert \bar{V}(\bar{q})\right\Vert _{\beta }$. Thus the inclusion $%
\overline{V}\in {\mathcal{GL}}^{(1)}$ implies that $\widehat{V}\in {\mathcal{%
GL}}^{(2)}$. \hfill $\square $

\section{Further examples\label{examples}}

In this section we give two examples of linear maps of the class ${\mathcal{%
GL}}_{q}(\mathfrak{B})$.

\textbf{Example 1.} Let $B_{\alpha }:=L^{p}(\mathbb{R}^{1},e^{-\alpha
\left\vert x\right\vert }dx)$, $p>1$, and $f(u)=Au$, where $A$ is the
integral operator with kernel $K$:%
\begin{equation*}
Au(x)=\int K(x,y)u(y)dy,\ x,y\in \mathbb{R}^{1}.
\end{equation*}

\begin{condition}
\label{c1}There exist $\beta ^{\ast }>\alpha ^{\ast }$ and $a>0$ such that%
\begin{equation*}
\left\vert K(x,y)\right\vert \leq ae^{-\frac{\beta ^{\ast }}{p}\left\vert
x-y\right\vert }\left( 1+\left\vert y\right\vert \right) ^{\delta },\ \delta
>0,
\end{equation*}%
for a.a. $x\in \mathbb{R}$.
\end{condition}

\begin{remark}
It is clear that $K(x,y)$ can grow to infinity along the main diagonal $x=y$%
, which implies that $A$ is in general unbounded in any weighted $L^{p}$.
\end{remark}

\begin{proposition}
\label{p1}Assume that Condition \ref{c1} holds. Then $A\in {\mathcal{GL}}%
_{q}(\mathfrak{B})$ with $q=\frac{p-1}{p\delta }$.
\end{proposition}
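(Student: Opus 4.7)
Since $A$ is linear, the generalized Lipschitz condition of Definition \ref{Defovsop} reduces to proving a single operator-norm bound of the form $\|Au\|_{B_\beta}\le L(\beta-\alpha)^{-1/q}\|u\|_{B_\alpha}$ for all $\alpha<\beta$ in $\mathcal{A}$ and $u\in B_\alpha$. My plan is to conjugate $A$ by the exponential weights and then apply Schur's test on the unweighted $L^p(\mathbb{R})$.

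Introduce the isometries $U_\gamma:B_\gamma\to L^p(\mathbb{R})$ defined by $U_\gamma f(x):=f(x)e^{-\gamma|x|/p}$ (so $\|U_\gamma f\|_{L^p}=\|f\|_{B_\gamma}$), and form the conjugate $\tilde A:=U_\beta AU_\alpha^{-1}$. Then $\tilde A$ is the integral operator on $L^p(\mathbb{R})$ with kernel
\[
\tilde K(x,y)=K(x,y)\,e^{-\beta|x|/p+\alpha|y|/p},
\]
and $\|A\|_{B_\alpha\to B_\beta}=\|\tilde A\|_{L^p\to L^p}$. Schur's test gives $\|\tilde A\|_{L^p\to L^p}\le M_1^{1/p'}M_2^{1/p}$, where $M_1:=\sup_x\int|\tilde K(x,y)|\,dy$ and $M_2:=\sup_y\int|\tilde K(x,y)|\,dx$, so it remains to bound the two suprema.

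The computational core is a single pointwise estimate on $\tilde K$. Using the reverse triangle inequality $|x|\ge |y|-|x-y|$ to write $e^{-\beta|x|/p}\le e^{-\beta|y|/p}e^{\beta|x-y|/p}$, Condition \ref{c1} yields
\[
|\tilde K(x,y)|\le a(1+|y|)^{\delta}\,e^{-(\beta^{*}-\beta)|x-y|/p}\,e^{-(\beta-\alpha)|y|/p}.
\]
The exponential in $|x-y|$ integrates over either variable to $2p/(\beta^{*}-\beta)$, uniformly bounded thanks to the fixed gap $\beta^{*}-\alpha^{*}>0$. The polynomial factor is controlled by the elementary inequality $\sup_{s\ge 0}(1+s)^{\delta}e^{-\gamma s}\le C_\delta\gamma^{-\delta}$ with $\gamma=(\beta-\alpha)/p$, giving $\sup_y(1+|y|)^{\delta}e^{-(\beta-\alpha)|y|/p}\le C'(\beta-\alpha)^{-\delta}$. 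Symmetric reasoning bounds the $M_1$ integral by the same quantity. Combining, $M_1,M_2\le C(\beta-\alpha)^{-\delta}$, and Schur's test yields $\|Au\|_{B_\beta}\le C(\beta-\alpha)^{-\delta}\|u\|_{B_\alpha}$. Since $\delta\le p\delta/(p-1)=1/q$ and $\beta-\alpha$ varies in the bounded set $(0,\alpha^{*}-\alpha_{*}]$, the factor $(\beta-\alpha)^{-\delta}$ is dominated by a constant multiple of $(\beta-\alpha)^{-1/q}$, and the required bound follows.

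The main obstacle is the bookkeeping required to align the three exponential scales $\beta^{*}/p$, $\beta/p$, $\alpha/p$ with the polynomial growth $(1+|y|)^{\delta}$ so that the $|x-y|$ integrals remain uniformly convergent across the parameter range while leaving behind a single clean factor $(\beta-\alpha)^{-\delta}$. The pivotal algebraic identity is $e^{-\beta|x|/p}e^{\alpha|y|/p}\le e^{-(\beta-\alpha)|y|/p}e^{\beta|x-y|/p}$, which converts the two ``wrong-sign'' weight factors into a single decay $e^{-(\beta-\alpha)|y|/p}$ at the cost of an exponential in $|x-y|$ that is safely absorbed by $e^{-\beta^{*}|x-y|/p}$.
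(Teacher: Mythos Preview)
Your proof is correct, and it takes a genuinely different route from the paper. The paper works directly with $\|Au\|_{B_\beta}^p$, inserts the kernel bound, and applies H\"older's inequality with exponents $p$ and $\theta=p'=p/(p-1)$ inside the $y$-integral; the factor $(1+|y|)^{\delta}$ is then paired with $e^{-\frac{\theta}{p}(\beta-\alpha)|y|}$, producing $b=\sup_{s\ge 0}(1+s)^{\theta\delta}e^{-\frac{\theta}{p}(\beta-\alpha)s}\sim C(\beta-\alpha)^{-\theta\delta}$, and hence exactly the stated exponent $1/q=\theta\delta=p\delta/(p-1)$. Your conjugation-plus-Schur argument instead pairs $(1+|y|)^{\delta}$ with $e^{-(\beta-\alpha)|y|/p}$ \emph{before} any H\"older step, yielding the sharper exponent $\delta$; you then downgrade to $1/q$ at the end using boundedness of $\beta-\alpha$. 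In effect you have shown $A\in\mathcal{GL}_{1/\delta}(\mathfrak B)\subset\mathcal{GL}_{(p-1)/(p\delta)}(\mathfrak B)$, which is strictly stronger than the proposition for $p>1$. The paper's approach is somewhat more hands-on and makes the claimed exponent appear naturally, while yours is cleaner, reusable (Schur's test packages the interpolation), and reveals that the value of $q$ in the statement is not optimal for this particular example.
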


\begin{remark}
For an implementation of any version of Ovsyannikov-type method, we need $%
q\geq 1$, which implies $\delta \leq \frac{p-1}{p}<1.$
\end{remark}

\noindent \textbf{Proof. }We start with the following estimate of the norm
of operator $A$: 
\begin{multline*}
\left\Vert Au\right\Vert _{B_{\beta }}^{p}=\int \left[ \int K(x,y)u(y)dy%
\right] ^{p}e^{-\beta \left\vert x\right\vert }dx \\
\leq a^{p}\int \left[ \int e^{-\frac{\beta ^{\ast }}{p}\left\vert
x-y\right\vert }\left( 1+\left\vert y\right\vert \right) ^{\delta
}\left\vert u(y)\right\vert dy\right] ^{p}~e^{-\beta \left\vert x\right\vert
}dx \\
=a^{p}\int \left[ \int e^{-\varepsilon \left\vert x-y\right\vert }\left(
1+\left\vert y\right\vert \right) ^{\delta }\left\vert u(y)\right\vert e^{-%
\frac{\beta }{p}\left\vert x-y\right\vert }dy\right] ^{p}~e^{-\beta
\left\vert x\right\vert }dx,
\end{multline*}%
where $\varepsilon =\frac{\beta ^{\ast }-\beta }{p}$. Observe that 
\begin{equation*}
e^{-\frac{\beta }{p}\left\vert x-y\right\vert }e^{-\frac{\beta }{p}%
\left\vert x\right\vert }\leq e^{-\frac{\beta }{p}\left\vert y\right\vert },
\end{equation*}%
so that%
\begin{equation*}
\left\Vert Au\right\Vert _{B_{\beta }}^{p}\leq a^{p}\int \left[ \int
e^{-\varepsilon \left\vert x-y\right\vert }\left( 1+\left\vert y\right\vert
\right) ^{\delta }\left\vert u(y)\right\vert e^{-\frac{\beta }{p}\left\vert
y\right\vert }dy\right] ^{p}dx.
\end{equation*}%
For $\theta $ such that $\theta ^{-1}+p^{-1}=1$ we have%
\begin{equation*}
e^{-\varepsilon \left\vert x-y\right\vert }\left( 1+\left\vert y\right\vert
\right) ^{\delta }\left\vert u(y)\right\vert e^{-\frac{\beta }{p}\left\vert
y\right\vert }=\left[ e^{-\frac{\varepsilon }{\theta }\left\vert
x-y\right\vert }\left( 1+\left\vert y\right\vert \right) ^{\delta }e^{-\frac{%
\beta -\alpha }{p}\left\vert y\right\vert }\right] \times \left[ e^{-\frac{%
\varepsilon }{p}\left\vert x-y\right\vert }\left\vert u(y)\right\vert e^{-%
\frac{\alpha }{p}\left\vert y\right\vert }\right] .
\end{equation*}%
Then, by Holder's inequality,%
\begin{multline*}
\left\Vert Au\right\Vert _{B_{\beta }}^{p} \\
\leq a^{p}\int \left[ \left\{ \int \left[ e^{-\frac{\varepsilon }{\theta }%
\left\vert x-y\right\vert }\left( 1+\left\vert y\right\vert \right) ^{\delta
}e^{-\frac{\beta -\alpha }{p}\left\vert y\right\vert }\right] ^{\theta
}dy\right\} ^{p/\theta }\times \int \left[ e^{-\frac{\varepsilon }{p}%
\left\vert x-y\right\vert }\left\vert u(y)\right\vert e^{-\frac{\alpha }{p}%
\left\vert y\right\vert }\right] ^{p}dy\right] dx \\
=a^{p}\int \left[ \left\{ \int e^{-\varepsilon \left\vert x-y\right\vert
}\left( 1+\left\vert y\right\vert \right) ^{\theta \delta }e^{-\frac{\theta 
}{p}(\beta -\alpha )\left\vert y\right\vert }dy\right\} ^{p/\theta }\times
\int e^{-\varepsilon \left\vert x-y\right\vert }\left\vert u(y)\right\vert
^{p}e^{-\alpha \left\vert y\right\vert }dy\right] dx \\
\leq a^{p}b\left[ \int e^{-\varepsilon \left\vert x-y\right\vert }dy\right]
^{p/\theta }\times \int \int e^{-\varepsilon \left\vert x-y\right\vert
}\left\vert u(y)\right\vert ^{p}e^{-\alpha \left\vert y\right\vert }dydx \\
\leq a^{p}bc^{p/\theta }\int \int e^{-\varepsilon \left\vert x-y\right\vert
}\left\vert u(y)\right\vert ^{p}e^{-\alpha \left\vert y\right\vert }dydx,
\end{multline*}%
where%
\begin{equation*}
b=\sup_{s\geq 0}\left( 1+s\right) ^{\theta \delta }e^{-\frac{\theta }{p}%
(\beta -\alpha )s}
\end{equation*}%
and%
\begin{equation*}
c=\int e^{-\frac{\beta ^{\ast }-\beta }{p}\left\vert x-y\right\vert }dy>\int
e^{-\varepsilon \left\vert y\right\vert }dy.
\end{equation*}%
Observe that%
\begin{equation*}
\int \int e^{-\varepsilon \left\vert x-y\right\vert }\left\vert
u(y)\right\vert ^{p}e^{-\alpha \left\vert y\right\vert }dydx=\int
e^{-\varepsilon \left\vert x-y\right\vert }dy~\left\Vert u\right\Vert
_{B_{\alpha }}^{p}=c\left\Vert u\right\Vert _{B_{\alpha }}^{p},
\end{equation*}%
which leads to the bound%
\begin{equation*}
\left\Vert Au\right\Vert _{B_{\beta }}^{p}\leq a^{p}bc^{p/\theta
+1}\left\Vert u\right\Vert _{B_{\alpha }}^{p}.
\end{equation*}%
It remains to compute constant $b=\left[ \sup_{s\geq 0}\left( 1+s\right) e^{-%
\frac{1}{p\delta }(\beta -\alpha )s}\right] ^{\theta \delta }$. Equating to $%
0$ the derivative $\frac{\partial }{\partial s}\left( 1+s\right) e^{-\frac{1%
}{p\delta }(\beta -\alpha )s}$ we obtain%
\begin{equation*}
b=\frac{C}{(\beta -\alpha )^{\theta \delta }},
\end{equation*}%
for some constant $C>0.$

It is clear that estimate (\ref{lg}) holds with $q=\frac{1}{\theta \delta }=%
\frac{p-1}{p\delta }$. 
\hfill%
$\square $

\textbf{Example 2.} A somewhat similar example is given by the spaces of
sequences 
\begin{equation*}
B_{\alpha }:=\left\{ (u_{k})_{k\in \mathbb{Z}}:\sum_{k\in \mathbb{Z}%
}\left\vert u_{k}\right\vert ^{p}e^{-\alpha \left\vert k\right\vert }<\infty
\right\} ,\ p>1,
\end{equation*}%
and the linear map given an infinite matrix $A=(A_{kj})_{k,j\in \mathbb{Z}}$
is with elements satisfying the bound 
\begin{equation*}
\left\vert A_{kj}\right\vert \leq ae^{-\frac{\beta ^{\ast }}{p}\left\vert
k-j\right\vert }\left( 1+\left\vert j\right\vert \right) ^{\delta }
\end{equation*}%
for some $\beta ^{\ast }>\alpha ^{\ast }$, $a>0$ and all $k\in \mathbb{Z}$.
The proof of the inclusion $A\in {\mathcal{GL}}_{q}(\mathfrak{B})$, $q=\frac{%
p-1}{p\delta }$, is similar to that of Proposition \ref{p1}. Similar to the
previous example, we have in general%
\begin{equation*}
\left\vert A_{kk}\right\vert \rightarrow \infty ,\ k\rightarrow \infty ,
\end{equation*}%
so that operator $A$ is unbounded in any weighted $l^{p}$.

\end{document}